\numberwithin{equation}{section}
\newtheorem {theorem}{Theorem}[section]
\newtheorem {lemma}[theorem]{Lemma}
\newtheorem {corollary}[theorem]{Corollary}
\theoremstyle{definition}
\theoremstyle{theorem}
\newtheorem {remark}[theorem]{Remark}
\newtheorem {example}[theorem]{Example}
\def\ba{\begin{array}}
\def\ea{\end{array}}
\def\bea{\begin{eqnarray} \label}
\def\eea{\end{eqnarray}}
\def\be{\begin{equation} \label}
\def\ee{\end{equation}}
\def\bit{\begin{itemize}}
\def\eit{\end{itemize}}
\def\ben{\begin{enumerate}}
\def\een{\end{enumerate}}
\def\EE{\mathbb{E}}
\def\NN{\mathbb{N}}
\def\PP{\mathbb{P}}
\def\QQ{\mathbb{Q}}
\def\RR{\mathbb{R}}
\def\g{\gamma}
\def\d{\delta}
\def\s{\sigma}
\def\O{\Omega}
\def\cF{\mathcal{F}}
\def\cH{\mathcal{H}}
\def\cK{\mathcal{K}}
\def\cZ{\mathcal{Z}}
\def\sZ{\mathscr{Z}}
\def\dint{\textup{d}}
\def\var{{\textup{var}}}
\def\dom{{\textup{dom}}}
\def\s{\star}
\def\ts{\,\widetilde{\star}}
\begin{document}

\title{\bfseries New Berry-Esseen bounds for non-linear functionals of Poisson random measures}

\author{Peter Eichelsbacher\footnotemark[1] \, and  Christoph Th\"ale\footnotemark[2]}

\date{}
\renewcommand{\thefootnote}{\fnsymbol{footnote}}
\footnotetext[1]{Ruhr University Bochum, Faculty of Mathematics, NA 3/66, D-44780 Bochum, Germany. E-mail: peter.eichelsbacher@rub.de}

\footnotetext[2]{Ruhr University Bochum, Faculty of Mathematics, NA 3/68, D-44780 Bochum, Germany. E-mail: christoph.thaele@rub.de}

\maketitle

\begin{abstract}
This paper deals with the quantitative normal approximation of non-linear functionals of Poisson random measures, where the quality is measured by the Kolmogorov distance. Combining Stein's method with the Malliavin calculus of variations on the Poisson space, we derive a bound, which is strictly smaller than what is available in the literature. This is applied to sequences of multiple integrals and sequences of Poisson functionals having a finite chaotic expansion. This leads to new Berry-Esseen bounds in de Jong's theorem for degenerate U-statistics. Moreover, geometric functionals of intersection processes of Poisson $k$-flats, random graph statistics of the Boolean model and non-linear functionals of Ornstein-Uhlenbeck-L\'evy processes are considered.
\bigskip
\\
{\bf Keywords}. {Berry-Esseen bound, central limit theorem, de Jong's theorem, flat processes, Malliavin calculus, multiple stochastic integral, Ornstein-Uhlenbeck-L\'evy process, Poisson process, random graphs, random measure, Skorohod isometric formula, Stein's method, U-statistics.}\\
{\bf MSC}. Primary 60F05, 60G57, 60G55; Secondary 60H05, 60H07, 60D05, 60G51.
\end{abstract}

\section{Introduction}

Combining Stein's method with the Malliavin calculus of variations in order to deduce quantitative limit theorems for non-linear functionals of random measures has become a relevant direction of research in recent times. Results in this area usually deal either with functionals of a Gaussian random measure or with functionals of a Poisson random measure. Applications of findings dealing with the Gaussian case have found notable applications in the theory and statistics of Gaussian random processes \cite{BarndorffNielsenEtAl1,BarndorffNielsenEtAl2} (most prominently the fractional Brownian motion \cite{NourdinBool}), spherical random fields \cite{BaldiEtAl,MarinoucciPeccati}, random matrix theory \cite{NourdinPeccatiMatrix} and universality questions \cite{NourdinPeccatiReinert}, whereas the findings for Poisson random measures have attracted applications in stochastic geometry \cite{LRP,LPST,STScaling,STFlats}, non-parametric Bayesian survival analysis \cite{deBlasiEtAl,PeccatiPruenster} or the theory 
of U-statistics \cite{PecTh13,ReitznerSchulte,SchulteKolmogorov}.

The present paper deals with quantitative central limit theorems for Poisson functionals (these are functionals of a Poisson random measure). Whereas most of the existing literature, such as \cite{PSTU2010,PeccatiTaqquDoubleIntegrals,PecTh13,ReitznerSchulte}, deals with smooth distances, such as the Wasserstein distance or a distance based on twice or trice differentiable test functions to measure the quality of the probabilistic approximation, our results deal with the non-smooth Kolmogorov distance. This is the maximal deviation of the involved distribution functions, which we consider as more intuitive and informative; let us agree to call a quantitative central limit theorem using the Kolmogorov distance a Berry-Esseen bound or theorem in what follows. Similar results for the Kolmogorov distance have previously appeared in \cite{SchulteKolmogorov}. Whereas in that paper a bound is derived using a case-by-case study around the non-differentiability point of the solution of the Stein equation and 
an analysis of the second-order derivative, we use a version of Stein's method, which circumvents such a case differentiation completely and avoids the usage of second-order derivatives. This provides new bounds, which differ in parts from those in \cite{SchulteKolmogorov}. In particular, our bounds are strictly smaller and also improve the constants appearing in \cite{SchulteKolmogorov}.

Our general result, Theorem \ref{thm:MalliavinSteinBound} below, is applied to sequences of (compensated) multiple integrals, which are the basic building blocks of the so-called Wiener-It\^o chaos associated with a Poisson random measure. We provide new Berry-Esseen bounds for the normal approximation of such sequences. Besides our plug-in-result, the main technical tool we use is an isometric formula for the Skorohod integral on the Poisson space. In the context of normal approximation, this approach is new, although it has previously been applied in \cite{PecTh13} for studying approximations by Gamma random variables. In a next step, this is applied to derive a new quantitative version of de Jong's theorem for degenerate U-statistics based on a Poisson measure. As far as we know, this is the first Berry-Essen-type version of de Jong's theorem. In a particular case we shall show that the speed of convergence of the quotient of the fourth moment and the squared variance to $3$ -- which is de Jong's original 
condition to ensure a central limit theorem -- also controls the rate of convergence measured by the Kolmogorov distance. As a second main application, we shall consider Poisson functionals having a finite chaotic expansion. Examples for such functionals are provided by non-degenerate U-statistics. We then study the normal approximation of such (suitably normalized) functionals and provide concrete applications to geometric functionals of intersection processes of Poisson $k$-flats and random graph statistic of the Boolean model as considered in stochastic geometry and to empirical means and second-order moments of Ornstein-Uhlenbeck L\'evy processes. In this context, our new bound simplifies considerably the necessary computations and avoids a subtle technical issue, which is present in \cite{SchulteKolmogorov}. One of the main technical tools is again an isometric formula for the Skorohod integral.

Our text is structured as follows: In Section \ref{sec:preliminaries} we introduce the necessary notions and notation and recall some important background material in order to make the paper self-contained. Our general bound for the normal approximation of Poisson functionals is the content of Section \ref{sec:generalbounds}. Our applications are presented in Section \ref{sec:applications}. In particular, Section \ref{sec:multipleintegrals} deals with multiple Poisson integrals, Section \ref{sec:deJong} with de Jong's theorem for degenerate U-statistics and Section \ref{sec:FiniteExpansion}  with non-degenerate U-statistics and general Poisson functionals having a finite chaotic expansion as well as with our concrete application to stochastic geometry and L\'evy processes.

\section{Preliminaries}\label{sec:preliminaries}

\paragraph{Poisson random measures.}
Let $(\cZ,\sZ)$ be a standard Borel space, which is equipped with a $\sigma$-finite measure $\mu$. By $\eta$ we denote a Poisson (random) measure on $\cZ$ with control $\mu$, which is defined on an underlying probability space $(\Omega,\cF,\PP)$. That is, $\eta=\{\eta(B):B\in\sZ_0\}$ is a collection of random variables indexed by the elements of $\sZ_{0}=\{B\in\sZ:\mu(B)<\infty\}$ such that $\eta(B)$ is Poisson distributed with mean $\mu(B)$ for each $B\in\sZ_0$, and for all $n\in\NN$, the random variables $\eta(B_1),\ldots,\eta(B_n)$ are independent whenever $B_1,\ldots,B_n$ are disjoint sets from $\sZ_0$ (the second property follows automatically from the first one if the measure $\mu$ does not have atoms, cf.\ \cite[Theorem VI.5.16]{Cinlar} or \cite[Corollary 3.2.2]{SW}). The distribution of $\eta$ (on the space of $\sigma$-finite counting measures on $\cZ$) will be denoted by $\PP_\eta$. For more details see \cite[Chapter VI]{Cinlar} and \cite[Chapter 3]{SW}.

\paragraph{$L^1$- and $L^2$-spaces.}
For $n\in\NN$ let us denote by $L^1(\mu^n)$ and $L^2(\mu^n)$ the space of integrable and square-integrable functions with respect to $\mu^n$, respectively. The scalar product and the norm in $L^2(\mu^n)$ are denoted by $\langle\,\cdot\,,\,\cdot\,\rangle_n$ and $\|\cdot\|_n$, respectively. From now on, we will omit the index $n$ as it will always be clear from the context. Moreover, let us denote by $L^2(\PP_\eta)$ the space of square-integrable functionals of a Poisson random measure $\eta$. Finally, we denote by $L^2(\PP,L^2(\mu))$ the space of jointly measurable mappings $h:\Omega\times\cZ\to\RR$ such that $\int_{\Omega} \int_{\cZ}h(\omega, z)^2\,\mu(\dint z) \,\PP(\dint \omega)<\infty$ (recall that $(\O,\cF,\PP)$ is our underlying probability space).

\paragraph{Chaos expansion.}
It is a crucial feature of a Poisson measure $\eta$ that any $F\in L^2(\PP_\eta)$ can be written as
\begin{equation}\label{eq:ChaosExpansion}
F=\EE F+\sum_{n=1}^\infty I_n(f_n)\,,
\end{equation}
where the sum converges in the $L^2$-sense, see \cite[Theorem 1.3]{LastPenrose}. Here, $I_n$ stands for the $n$-fold Wiener-It\^o integral (sometimes called Poisson multiple integral) with respect to the compensated Poisson measure $\eta-\mu$ and for each $n\in\NN$, $f_n$ is a uniquely determined symmetric function in $L^2(\mu^n)$ (depending, of course, on $F$). In particular, the multiple integrals are centered random variables and orthogonal in the sense that $$\EE\big[I_{q_1}(f_1)I_{q_2}(f_2)\big]={\bf 1}(q_1=q_2)\,q_1!\langle f_1,f_2\rangle$$ all for integers $q_1,q_2\geq 1$ and symmetric functions $f_1\in L^2(\mu^{q_1})$ and $f_2\in L^2(\mu^{q_2})$. The representation \eqref{eq:ChaosExpansion} is called the chaotic expansion of $F$ and we say that $F$ has a finite chaotic expansion if only finitely many of the functions $f_n$ are non-vanishing. In particular, \eqref{eq:ChaosExpansion} together with the orthogonality of multiple stochastic integrals leads to the variance formula
\begin{equation}\label{eq:Varianzformel}
\var(F)=\sum_{n=1}^\infty n!\|f_n\|^2\,.
\end{equation}

\paragraph{Malliavin operators.}
For a functional $F=F(\eta)$ of a Poisson measure $\eta$ let us introduce the difference operator $D_zF$ by putting
\begin{equation}\label{eq:DifferenceOperatorPathwise}
 D_zF(\eta) := F(\eta+\d_z) - F(\eta)\,,\qquad z\in\cZ\,.
\end{equation}
$D_zF$ is also called the add-one-cost operator as it measures the effect on $F$ of adding the point $z\in\cZ$ to $\eta$. If $F$ has a chaotic representation as at \eqref{eq:ChaosExpansion} such that $\sum_{n=1}^\infty n\,n!\|f_n\|^2<\infty$ (we write $F\in\dom(D)$ in this case), then $D_zF$ can alternatively be characterized as $$D_zF=\sum\limits_{n=1}^\infty nI_{n-1}(f_n(z,\,\cdot\,))\,,$$ where $f_n(z,\,\cdot\,)$ is the function $f_n$ with one of its arguments fixed to be $z$. We remark that $DF$ is an element of $L^2(\PP,L^2(\mu))$. Besides of $D$, let us also introduce three other Malliavin operators $L$, $L^{-1}$ and $\d$. If $F$ satisfies $\sum_{n=1}^\infty n^2\,n!\|f_n\|^2<\infty$, the Ornstein-Uhlenbeck generator is defined by 
$$LF=-\sum_{n=1}^\infty nI_n(f_n)$$
and its inverse is denoted by $L^{-1}$. In terms of the chaos expansion of a centred random variable $F \in L^2(\PP_{\eta})$,
i.e. $\EE(F)=0$, it is given by $$L^{-1}F=-\sum_{n=1}^\infty{1\over n}I_n(f_n)\,.$$ Finally, if $z\mapsto h(z)$ is a random function on $\cZ$ with chaos expansion $h(z)=z+\sum_{n=1}^\infty I_n(h_n(z,\,\cdot\,))$ with symmetric 
functions $h_n(z,\,\cdot\,)\in L^2(\mu^n)$ such that $\sum_{n=0}^\infty(n+1)!\|h_n\|^2<\infty$ (let us write $h\in\dom(\d)$ if this is satisfied), the Skorohod integral $\d(h)$ of $h$ is defined as $$\d(h)=\sum_{n=0}^\infty I_{n+1}(\widetilde{h}_n)\,,$$ where $\widetilde{h}_n$ is the canonical symmetrization of $h_n$ as a function of $n+1$ variables. The next lemma summarizes a relationships between the operators $D$, $\d$ and $L$, the classical and a modified integration-by-parts-formula (taken from \cite[Lemma 2.3]{SchulteKolmogorov}) as well as an isometric formula for Skorohod integrals, which is Proposition 6.5.4 in \cite{PrivaultBook}.
\begin{lemma}
\begin{itemize}
\item[(i)] For every $F \in \dom(L)$ it holds that $F \in \dom(D)$ and $DF \in \dom(\delta)$, and 
\begin{equation}\label{eq:DeltaD=-L}
 \delta (DF)=-LF\,.
\end{equation}
\item[(ii)] We have the integration-by-parts-formula
\begin{equation}\label{eq:IntegrationByParts}
 \EE [ F \delta(h) ]=\EE\langle DF,h\rangle
\end{equation}
for every $F\in\dom(D)$ and $h\in\dom(\d)$. 
\item[(iii)] Suppose that $F\in L^2(\PP_\eta)$ (not necessarily assuming that $F$ belongs to the domain of $D$), that $h\in\dom(\delta)$ has a finite chaotic expansion and that $D_z{\bf 1}(F>x)h(z)\geq 0$ for any $x\in\RR$ and $\mu$-almost all $z\in\cZ$. Then
\begin{equation}\label{eq:IntegrationByPartsModified}
 \EE [ {\bf 1}(F>x) \delta(h) ]=\EE\langle D{\bf 1}(F>x),h\rangle\,.
\end{equation}
\item[(iv)] If $h\in\dom(\d)$ it holds that
\begin{equation}\label{eq:IsometricFormula}
 \EE[\delta(h)^2] = \EE\int\limits_{\cZ}h(z_1)^2\,\mu(\dint z_1)+\EE\int\limits_{\cZ}\int\limits_{\cZ}(D_{z_2}h(z_1))^2\,\mu(\dint z_1)\mu(\dint z_2)\,.
\end{equation}
\end{itemize}
\end{lemma}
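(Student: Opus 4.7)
The plan is to handle all four claims through one mechanism: expand $F$ and $h$ along the Wiener--It\^o chaos and reduce each identity to matching coefficients via $\EE[I_p(f)I_q(g)]=\mathbf{1}(p=q)\,p!\,\langle f,g\rangle$. For (i), writing $F=\EE F+\sum_{n\ge 1}I_n(f_n)$ I would read off $D_zF=\sum_{n\ge 1}n\,I_{n-1}(f_n(z,\cdot))$ and observe that the full $(n+1)$-variable symmetrisation of $z\mapsto n\,f_n(z,\cdot)$ is simply $n\,f_n$, since $f_n$ is already symmetric; applying $\delta$ therefore yields $\delta(DF)=\sum_n n\,I_n(f_n)=-LF$. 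For (ii), writing $h(z)=h_0(z)+\sum_{n\ge 1}I_n(h_n(z,\cdot))$ and $\delta(h)=\sum_{n\ge 0}I_{n+1}(\widetilde h_n)$, orthogonality gives
\[
\EE[F\,\delta(h)]=\sum_{n\ge 0}(n+1)!\,\langle f_{n+1},\widetilde h_n\rangle\quad\text{and}\quad\EE\langle DF,h\rangle=\sum_{n\ge 1}n\cdot(n-1)!\,\langle f_n,h_{n-1}\rangle,
\]
and these coincide after shifting the index by one, because the symmetry of $f_{n+1}$ lets one replace $\widetilde h_n$ by $h_n$ in the pairing.

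For (iii) the subtle point is that $F$ need not belong to $\dom(D)$, so (ii) cannot be applied to $\mathbf{1}(F>x)$ directly. My plan is a double approximation. First, truncate $F$ by $F_N=\EE F+\sum_{n\le N}I_n(f_n)\in\dom(L)$; then approximate $t\mapsto\mathbf{1}(t>x)$ by an increasing family of bounded Lipschitz functions $\psi_m\uparrow\mathbf{1}(\cdot>x)$. Each $\psi_m(F_N)$ lies in $\dom(D)$ by the obvious Lipschitz bound on the add-one-cost operator, so (ii) applies and gives $\EE[\psi_m(F_N)\,\delta(h)]=\EE\langle D\psi_m(F_N),h\rangle$. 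Sending $N\to\infty$ is justified by $L^2$-convergence $F_N\to F$, continuity and boundedness of $\psi_m$, and $\delta(h)\in L^2$ (a consequence of $h$ having finite chaos). Sending $m\to\infty$ on the left is bounded convergence; on the right it is \emph{monotone} convergence, and this is precisely where the sign hypothesis $D_z\mathbf{1}(F>x)h(z)\ge 0$ is used, since without it no integrable majorant for $\langle D\psi_m(F),h\rangle$ is generally available. This approximation step is where the main technical difficulty lies.

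For (iv), expand $\delta(h)=\sum_{n\ge 0}I_{n+1}(\widetilde h_n)$ and apply orthogonality to obtain $\EE[\delta(h)^2]=\sum_{n\ge 0}(n+1)!\,\|\widetilde h_n\|^2$. Unfolding $\widetilde h_n(z_0,\ldots,z_n)=(n+1)^{-1}\sum_{i=0}^{n}h_n(z_i;z_{-i})$ and squaring splits the norm into $n+1$ diagonal squares and $(n+1)n$ off-diagonal cross terms. Summing the diagonal part over $n$ reconstructs $\EE\int h(z)^2\mu(\dint z)$ via $\EE[h(z)^2]=h_0(z)^2+\sum_n n!\,\|h_n(z,\cdot)\|^2$. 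For the off-diagonal part, the partial symmetry of $h_n$ in its last $n$ arguments makes all the cross pairings equal, so they collapse to a single kernel integral carrying the combinatorial weight $n\cdot n!$; inserting the expansion $D_{z_2}h(z_1)=\sum_{n\ge 1}n\,I_{n-1}(h_n(z_1,z_2,\cdot))$ into $\EE\iint(D_{z_2}h(z_1))^2\mu(\dint z_1)\mu(\dint z_2)$ and again invoking orthogonality yields exactly the same expression, closing the identity. Apart from the limit passage in (iii), all steps are direct bookkeeping with chaos kernels, with the only mild care needed being the partial (rather than full) symmetry of $h_n$ entering the off-diagonal calculation in (iv).
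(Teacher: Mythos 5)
The paper itself gives no proof of this lemma: parts (i)--(iii) are quoted from \cite[Lemma 2.3]{SchulteKolmogorov} (together with the standard Poisson Malliavin calculus of \cite{NualartVives,PSTU2010,LastPenrose}), and part (iv) is Proposition 6.5.4 of \cite{PrivaultBook}. Your chaos bookkeeping for (i) and (ii) is correct and is indeed the standard argument, modulo the routine justification of interchanging sums and expectations. The problems are in (iii) and (iv).

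For (iii), the two limit interchanges you propose do not go through as described, and the decisive one fails outright: letting $m\to\infty$ on the right-hand side is \emph{not} monotone convergence. Take for instance $\psi_m(t)=\min\{1,m\max(t-x,0)\}\uparrow{\bf 1}(t>x)$. On the event $\{F>x,\ F+D_zF>x\}$ one has $D_z{\bf 1}(F>x)=0$, so the hypothesis $D_z{\bf 1}(F>x)h(z)\geq 0$ places no restriction whatsoever on $h(z)$ there, while $D_z\psi_m(F)=\psi_m(F+D_zF)-\psi_m(F)$ equals $m\,D_zF$ for small $m$ and $0$ for large $m$: it has arbitrary sign and is non-monotone in $m$. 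Hence neither monotone convergence nor Fatou applies to $\EE\langle D\psi_m(F),h\rangle$, and, as you note yourself, no integrable majorant is available; the step genuinely breaks down. The $N\to\infty$ step is also only justified for the left-hand side: on the right you would need $D_z\psi_m(F_N)\to D_z\psi_m(F)$ in a sense that can be integrated against $h$ over the possibly infinite measure $\PP\otimes\mu$, and since $F$ need not lie in $\dom(D)$ there is no $L^2(\PP\otimes\mu)$ control of $DF_N$. The argument that works (and is the one behind \cite[Lemma 2.3]{SchulteKolmogorov}) does not smooth the indicator at all; it localizes in $z$ instead: replace $h$ by $h\,{\bf 1}(\cdot\in B_k)$ with $B_k\uparrow\cZ$, $\mu(B_k)<\infty$, so that $h{\bf 1}_{B_k}\in L^1(\PP\otimes\mu)$ by Cauchy--Schwarz; the pathwise representation of the Skorohod integral \cite{LastPenrose} and the Mecke formula then give $\EE[{\bf 1}(F>x)\,\delta(h{\bf 1}_{B_k})]=\EE\int_{B_k}D_z{\bf 1}(F>x)\,h(z)\,\mu(\dint z)$ directly for the bounded functional ${\bf 1}(F>x)$; finally $\delta(h{\bf 1}_{B_k})\to\delta(h)$ in $L^2$ because $h$ has a finite chaotic expansion, and the sign hypothesis is used exactly where monotone convergence is legitimate: a fixed nonnegative integrand over an increasing domain $B_k$.

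In (iv) your off-diagonal computation is wrong. By the partial symmetry of $h_n$, the $(n+1)n$ cross terms in $\|\widetilde h_n\|^2$ all equal $\int_\cZ\int_\cZ\langle h_n(z_1,z_2,\cdot),h_n(z_2,z_1,\cdot)\rangle\,\mu(\dint z_1)\mu(\dint z_2)$, which is \emph{not} the same as $\int_\cZ\int_\cZ\|h_n(z_1,z_2,\cdot)\|^2\,\mu(\dint z_1)\mu(\dint z_2)$: partial symmetry does not allow you to swap the first two arguments of $h_n$. What the chaos computation actually proves is Privault's isometry $\EE[\delta(h)^2]=\EE\int_\cZ h(z)^2\,\mu(\dint z)+\EE\int_\cZ\int_\cZ D_{z_2}h(z_1)\,D_{z_1}h(z_2)\,\mu(\dint z_1)\mu(\dint z_2)$, from which the formula with $(D_{z_2}h(z_1))^2$ follows only as an upper bound via Cauchy--Schwarz. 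As an exact identity, \eqref{eq:IsometricFormula} fails already for $h(z)=I_1(h_1(z,\cdot))$ with $h_1={\bf 1}_A\otimes{\bf 1}_B$, $A\cap B=\emptyset$; note that the upper bound is all that is actually used later in the paper (the applications of \eqref{eq:IsometricFormula} appear with an inequality sign), so your proof should be rephrased to establish the crossed-product identity and then pass to the squared version by Cauchy--Schwarz.
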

We refer the reader to \cite{NualartVives} or \cite{PSTU2010} for more details and background material concerning the Malliavin formalism on the Poisson space. Moreover, we refer to \cite{LastPenrose} for a pathwise interpretation of the Skorohod integral.

\paragraph{Contractions.}
Let for integers $q_1,q_2\geq 1$, $f_1\in L^2(\mu^{q_1})$ and $f_2\in L^2(\mu^{q_2})$ be symmetric functions and $r\in\{0,\ldots,q\}$, $\ell\in\{1,\ldots,r\}$. The contraction kernel $f_1\s_r^\ell f_2$ on $\cZ^{q_1+q_2-r-\ell}$ acts on the tensor product $f_1\otimes f_2$ first by identifying $r$ variables and then integrating out $\ell$ among them. More formally,
\begin{equation*}
\begin{split}
f_1\s_r^\ell f_2(\g_1,\ldots,\g_{r-\ell},t_1,\ldots,t_{q_1-r},&s_1,\ldots,s_{q_2-r}) = \int\limits_{\cZ^\ell}f_1(z_1,\ldots,z_\ell,\g_1,\ldots,\g_{r-\ell},t_1,\ldots,t_{q_1-r})\\
&\times\,f_2(z_1,\ldots,z_\ell,\g_1,\ldots, \g_{r-\ell},s_1,\ldots,s_{q_2-r})\,\mu^\ell\big(\dint(z_1,\ldots,z_\ell)\big).
\end{split}
\end{equation*}
In addition, we put
\begin{equation*}
f_1\s_r^0f_2(\g_1,\ldots,\g_r,t_1,\ldots,t_{q_1-r},s_1,\ldots,s_{q_2-r})=f_1(\g_1,\ldots,\g_r,t_1,\ldots,t_{q_1-r})f_2(\g_1,\ldots,\g_r,s_1,\ldots,s_{q_2-r}).
\end{equation*}
Besides of the contraction $f_1\s_r^\ell f_2$, we will also deal with their canonical symmetrizations $f_1\ts_r^\ell f_2$. They are defined as $$(f_1\ts_r^\ell f_2)(x_1,\ldots,x_{q_1+q_2-r-\ell})={1\over (q_1+q_2-r-\ell)!}\sum_\pi(f_1\s_r^\ell f_2)(x_{\pi(1)},\ldots,x_{\pi(q_1+q_2-r-\ell)}),$$ where the sum runs over all $(q_1+q_2-r-\ell)!$ permutations of $\{1,\ldots,q_1+q_2-r-\ell\}$.

\paragraph{Product formula.}
Let $q_1,q_2\geq 1$ be integers and $f_1\in L^2(\mu^{q_1})$ and $f_2\in L^2(\mu^{q_2})$ be symmetric functions. In terms of the contractions of $f_1$ and $f_2$ introduced in the previous paragraph, one can express the product of $I_{q_1}(f_1)$ and $I_{q_2}(f_2)$ as follows:
\begin{equation}\label{eq:ProductFormula}
I_{q_1}(f_1)I_{q_2}(f_2)=\sum_{r=0}^{\min(q_1,q_2)}r!{q_1\choose r}{q_2\choose r}\sum_{\ell=0}^r{r\choose\ell}I_{q_1+q_2-r-\ell}(f_1\ts_r^\ell f_2)\,;
\end{equation}
see \cite[Proposition 6.5.1]{PeccatiTaqquBook}.

\paragraph{Technical assumptions.}
Whenever we deal with a multiple stochastic integral, a sequence $F_n=I_q(f_n)$ or a finite sum $F_n=\sum_{i=1}^k I_{q_i}(f_n^{(i)})$ of such integrals with integers $k\geq 1$, $q_i\geq 1$ for $i=1, \ldots,k$, and symmetric functions $f_n\in L^2(\mu_n^q)$ or $f_n^{(i)}\in L^2(\mu_n^{q_i})$ we will (implicitly) assume that the following technical conditions are satisfied (for sequences of single integrals, the upper index has to be ignored):
\begin{itemize}
 \item[i)] for any $i\in \{1, \ldots, k\}$ and any $r\in\{1,\ldots,q_i\}$, the contraction $f_n^{(i)}\s_r^{q_i-r}f_n^{(i)}$ is an element of $L^2(\mu_n^{q_i})$;
 \item[ii)] for any $r\in\{1,\ldots,q_i\}$, $\ell\in\{1,\ldots,r\}$ and $(z_1,\ldots,z_{2q_i-r-\ell})\in\cZ^{2q_i-r-\ell}$ we have that $(|f_n^{(i)}|\s_r^\ell|f_n^{(i)}|)(z_1,\ldots,z_{2q_i-r-\ell})$ is well defined and finite;
 \item[iii)] for any $i,j\in\{1,\ldots,k\}$ and $k\in\{\max(|q_i-q_j|,1),\ldots,q_i+q_j-2\}$ and any $r$ and $\ell$ satisfying $k=q_i+q_j-2-r-\ell$ we have that $$\int\limits_{\cZ}\Big(\int\limits_{\cZ^k}\big(f_n^{(i)}(z,\,\cdot\,)\s_r^\ell f_n^{(j)}(z,\,\cdot\,)\big)^2\,\dint\mu^{k}\Big)^{1/2}\mu(\dint z)<\infty\,,\qquad i,j\in\{1,\ldots,k\}\,.$$
\end{itemize}
For a detailed explanation of the r\^ole of these conditions we refer to \cite{LRP} or \cite{PSTU2010}, but we remark that these technical assumptions ensure in particular that $F_n^2$ is an element of $L^2(\PP_\eta)$, such that $\{\EE\big[F_n^4\big]:n\in\NN\}$ is a bounded sequence. We finally note that (iii) is automatically satisfied if the control measure $\mu$ of the Poisson measure $\eta$ is finite -- just apply the Cauchy-Schwarz inequality.

\paragraph{Probability metrics.} To measure the distance between the distributions of two random variables $X$ and $Y$ defined on a common probability space $(\Omega,\cF,\PP)$, one often uses distances of the form $$d_{\cH}(X,Y)=\sup_{h\in\cH}\big|\EE h(X)-\EE h(Y)\big|\,,$$ where $\cH$ is a suitable class of real-valued test functions (note that we slightly abuse notation by writing $d(X,Y)$ instead of $d({\rm law}(X),{\rm law}(Y))$). Prominent examples are the class $\cH_W$ of Lipschitz functions with Lipschitz constant bounded by one or the class $\cH_K$ of indicator functions of intervals $(-\infty, x]$ with $x\in\RR$. The resulting distances $d_W:=d_{\cH_W}$ and $d_K:=d_{\cH_K}$ are usually called Wasserstein and Kolmogorov distance. We notice that $d_W(X_n,Y)\to 0$ or $d_K(X_n,Y)\to 0$ as $n\to\infty$ for a sequence of random variables $X_n$ implies convergence of $X_n$ to $Y$ in distribution (the converse is not necessarily true, but holds for the Kolmogorov distance if the target random variable $Y$ 
has a density with respect to the Lebesgue measure on $\RR$). 

\paragraph{Stein's method.}
A standard Gaussian random variable $Z$ is characterized by the fact that for every absolutely continuous function $f:\RR\to\RR$ for which $\EE\big[Zf(Z)\big]<\infty$ it holds that $$\EE\big[f'(Z)-Zf(Z)\big]=0\,.$$ This together with the definition of the Kolmogorov distance is the motivation to study the Stein equation
\begin{equation}\label{eq:SteinEquation}
 f'(w)-wf(w) = {\bf 1}(w\leq x) - \Phi(x)\,,\qquad w\in\RR\,,
\end{equation}
in which $x \in \RR$ is fixed and $\Phi(x)=\PP(Z\leq x)$ denotes the distribution function of $Z$. A solution of the Stein equation is a function $f_x$, depending on $x$, which satisfies \eqref{eq:SteinEquation}. The bounded solution of the Stein equation is given by $$f_x(w)=e^{w^2/2}\int\limits_{-\infty}^w\big({\bf 1}(y\leq x)-\Phi(x)\big)e^{-y^2/2}\,\dint y\,,$$ see \cite[Lemma 2.2]{ChenGoldsteinShao}. It has the property that $0 < f_x(w)\leq{\sqrt{2\pi}\over 4}$. Moreover, we observe that $f_x$ is continuous on $\RR$, infinitely differentiable on $\RR\setminus\{x\}$, but not differentiable at $x$. However, interpreting the derivative of $f$ at $x$ as $1 - \Phi(x)+xf(x)$ in view of \eqref{eq:SteinEquation}, we have
\begin{equation}\label{eq:SteinBoundFirstDerivative}
 \big|f_x'(w)\big| \leq 1\qquad{\rm for\ all}\qquad w\in\RR
\end{equation}
according to \cite[Lemma 2.3]{ChenGoldsteinShao}. Moreover, let us recall from the same result that $f_x$ satisfies the bound
\begin{equation}\label{eq:SteinBoundIncrements}
 \big|(w+u)f_x(w+u)-(w+v)f_x(w+v)\big|\leq \left(|w|+{\sqrt{2\pi}\over 4}\right)(|u|+|v|)
\end{equation}
for all $u,v,w\in\RR$.

If we replace $w$ by a random variable $W$ and take expectations in the Stein equation \eqref{eq:SteinEquation}, we infer that $$\EE\big[f_x'(W)-Wf_x(W)\big]=\PP(W\leq x)-\Phi(x)$$ and hence
\begin{equation}\label{eq:SteinKolmogorov}
 \sup_{x\in\RR}\big|\PP(W\leq x)-\Phi(x)\big|=\sup_{x\in\RR}\big|\EE\big[f_x'(W)-Wf_x(W)\big]\big|\,.
\end{equation}
We identify the quantity on the left hand side of \eqref{eq:SteinKolmogorov} as the Kolmogorov distance between (the laws of) $W$ and the standard Gaussian variable $Z$.

\section{General Malliavin-Stein bounds}\label{sec:generalbounds}

Our first contribution in this paper is a new bound for the Kolmogorov distance $d_K(F,Z)$ between a Poisson functional $F$ and a standard Gaussian random variable $Z$. Our bound involves the Malliavin operators $D$ and $L^{-1}$ as introduced in the previous section. Moreover, our set-up is that $(\cZ,\sZ)$ is a standard Borel space which is equipped with a $\sigma$-finite measure $\mu$ and that $\eta$ is a Poisson measure on $\cZ$ with control $\mu$.

\begin{theorem}\label{thm:MalliavinSteinBound}
Let $F\in L^2(\PP_\eta)$ be such that $\EE F=0$ and $F\in\dom(D)$ and denote by $Z$ a standard Gaussian random variable. Then
\begin{equation*}
\begin{split}
d_K(F,Z) \leq \EE|1-\langle DF,-DL^{-1}F\rangle| +{\sqrt{2\pi}\over 8}\,&\EE\langle(DF)^2,|DL^{-1}F|\rangle+{1\over 2}\EE\langle(DF)^2,|F\times DL^{-1}F|\rangle\\
&+\sup_{x\in\RR}\EE\langle (DF)\,D{\bf 1}(F>x),|DL^{-1}F|\rangle\,,
\end{split}
\end{equation*}
where we use the standard notation that $$\langle DF,-DL^{-1}F\rangle=-\int\limits_{\cZ}(D_zF)\times (D_zL^{-1}F)\,\mu(\dint z)$$ (and similarly for the other terms).
\end{theorem}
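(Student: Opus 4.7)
The plan is to combine the Stein equation for the Kolmogorov distance with a Malliavin integration-by-parts followed by a pathwise Taylor-like expansion of $f_x$, and then to re-insert the Stein equation itself to split the resulting error into a smooth part and a jump part. By \eqref{eq:SteinKolmogorov} it suffices to bound $|\EE[f_x'(F)-Ff_x(F)]|$ uniformly in $x\in\RR$. Since $F$ is centred, $F=-\delta(DL^{-1}F)$ by \eqref{eq:DeltaD=-L}. The bound $|f_x'|\leq 1$ from \eqref{eq:SteinBoundFirstDerivative} combined with the pathwise formula \eqref{eq:DifferenceOperatorPathwise} gives $|D_zf_x(F)|\leq|D_zF|$, so that $\EE\|Df_x(F)\|^2\leq\EE\|DF\|^2<\infty$ and thus $f_x(F)\in\dom(D)$; moreover $DL^{-1}F\in\dom(\delta)$, since $\delta(DL^{-1}F)=-F\in L^2(\PP_\eta)$. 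The integration-by-parts formula \eqref{eq:IntegrationByParts} is therefore applicable and yields
\[
\EE[Ff_x(F)]=\EE\langle Df_x(F),-DL^{-1}F\rangle\,.
\]

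Next, I invoke the fundamental theorem of calculus (valid because $f_x$ is continuous and $C^\infty$ off the single point $x$) to write
\[
D_zf_x(F)=f_x'(F)\,D_zF+\int_0^{D_zF}\bigl(f_x'(F+t)-f_x'(F)\bigr)\,\dint t\,.
\]
Substituting back and using $|f_x'|\leq 1$, the leading contribution to $\EE[f_x'(F)-Ff_x(F)]$ can be written as $\EE[f_x'(F)(1-\langle DF,-DL^{-1}F\rangle)]$, whose absolute value is bounded by $\EE|1-\langle DF,-DL^{-1}F\rangle|$, the first term of the announced bound. It then remains to control
\[
\EE\int_\cZ\Bigl(\int_0^{D_zF}\bigl(f_x'(F+t)-f_x'(F)\bigr)\,\dint t\Bigr)(-D_zL^{-1}F)\,\mu(\dint z)\,.
\]

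The decisive step is to re-insert \eqref{eq:SteinEquation} and write $f_x'(F+t)-f_x'(F)=[(F+t)f_x(F+t)-Ff_x(F)]+[{\bf 1}(F+t\leq x)-{\bf 1}(F\leq x)]$. The first (``smooth'') bracket is estimated via the Lipschitz-type inequality \eqref{eq:SteinBoundIncrements}, which gives $|\int_0^{D_zF}[(F+t)f_x(F+t)-Ff_x(F)]\,\dint t|\leq(|F|+\tfrac{\sqrt{2\pi}}{4})(D_zF)^2/2$ and hence, after integration against $|D_zL^{-1}F|\mu(\dint z)$, exactly the second and third terms of the bound. The second (``jump'') bracket is handled by a short case distinction on the signs of $D_zF$ and $F-x$: its integral over $[0,D_zF]$ vanishes unless $F$ and $F+D_zF$ lie on opposite sides of $x$, in which case the integrand is constant $\pm 1$ on a sub-interval of length at most $|D_zF|$. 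The same case analysis shows that $(D_zF)\,D_z{\bf 1}(F>x)$ is always non-negative and coincides with $|D_zF|\cdot|D_z{\bf 1}(F>x)|$, so the jump contribution is bounded by $\EE\langle(DF)\,D{\bf 1}(F>x),|DL^{-1}F|\rangle$; taking the supremum over $x\in\RR$ produces the fourth term.

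I expect the only genuine subtlety to be the justification of the integration-by-parts step, since $f_x$ fails to be differentiable at the single point $x$ and no convenient chaos expansion of $f_x(F)$ is readily available. Verifying $f_x(F)\in\dom(D)$ through the pathwise Lipschitz estimate $|D_zf_x(F)|\leq|D_zF|$ (rather than via a chaotic decomposition of $f_x(F)$) circumvents this difficulty cleanly; after this point the proof reduces to the elementary Taylor-plus-Stein decomposition sketched above, combined with the pointwise bounds \eqref{eq:SteinBoundFirstDerivative} and \eqref{eq:SteinBoundIncrements}.
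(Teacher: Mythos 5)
Your proposal is correct and follows essentially the same route as the paper's proof: the identity $\EE[Ff_x(F)]=\EE\langle Df_x(F),-DL^{-1}F\rangle$ via \eqref{eq:DeltaD=-L} and \eqref{eq:IntegrationByParts}, the decomposition $D_zf_x(F)=(D_zF)f_x'(F)+\int_0^{D_zF}(f_x'(F+t)-f_x'(F))\,\dint t$, re-insertion of the Stein equation to split the remainder into the smooth part (bounded by \eqref{eq:SteinBoundIncrements}, giving the second and third terms) and the jump part (bounded by the sign analysis showing $|I_2|\leq (D_zF)D_z{\bf 1}(F>x)$, giving the supremum term). Your additional verification that $f_x(F)\in\dom(D)$ via the pathwise Lipschitz bound $|D_zf_x(F)|\leq|D_zF|$ is a small extra justification of the integration-by-parts step that the paper leaves implicit, not a different argument.
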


\begin{remark}\label{rem:AfterThm31}\rm
\begin{itemize}
\item Comparing our bound with that for $d_K(F,Z)$ from \cite{SchulteKolmogorov}, we see that the result in \cite{SchulteKolmogorov} involves the additional term $\EE\langle(DF)^2,|DF\times DL^{-1}F|\rangle$, implying that the bound in \cite{SchulteKolmogorov} is strictly larger than ours. In addition, Theorem \ref{thm:MalliavinSteinBound} improves the constants ibidem. 
\item Our bound should be compared with a similar bound from \cite[Theorem 3.1]{PSTU2010} for the Wasserstein distance between $F$ and $Z$. Namely, if $F\in\dom(D)$ and $\EE F=0$, then Theorem 3.1 in \cite{PSTU2010} states that $$d_W(F,Z)\leq\EE|1-\langle DF,-DL^{-1}F\rangle|+\EE\langle (DF)^2,|DL^{-1}F|\rangle\,.$$ Our bound involves additional term, reflecting the effect that our test functions are indicator functions of intervals $(-\infty,x]$, $x\in\RR$, in contrast to Lipschitz functions with Lipschitz constant bounded by one. 
\item It is well known that Wasserstein and Kolmogorov distance are related by
\begin{equation}\label{eq:InequalityDkDw}
d_K(F,Z)\leq 2\sqrt{d_W(F,Z)}\,. 
\end{equation}
However, this inequality leads to bounds for $d_K(F,Z)$, which are systematically larger than the bounds obtained from Theorem \ref{thm:MalliavinSteinBound}. For instance, if the control of $\eta$ is given by $n\mu$ with integers $n\geq 1$ and if we denote the Poisson functional by $F_n$ in order to indicate its dependence on $n$, then we often have that $d_W(F_n,Z)\leq c_W n^{-1/2}$ and $d_K(F_n,Z)\leq c_Kn^{-1/2}$ for constants $c_W,c_K>0$, whereas \eqref{eq:InequalityDkDw} would deliver the suboptimal rate $n^{-1/4}$ for $d_K(F_n,Z)$ only (see Examples \ref{ex:flats}, \ref{ex:BooleanModel} and \ref{ex:levy} for instance).
\item Other examples for bounds between the law of a Poisson functional and some target random variable in the spirit of Theorem \ref{thm:MalliavinSteinBound} are the paper \cite{PeccatiZhengMulti} dealing with the multivariate normal approximation (with applications in \cite{LPST}), the paper \cite{PecTh13} considering the approximation by a Gamma random variable as well as \cite{PeccatiChenStein}, in which the Chen-Stein method for Poisson approximation has been investigated (see also \cite{STScaling,STFlats} for applications of this result).
\item We finally remark that if $F$ is a functional of a Gaussian random measure on $\cZ$ with control $\mu$, then $$d_K(F,Z)\leq\EE|1-\langle DF,-DL^{-1}F\rangle|$$ as shown in \cite[Theorem 3.1]{NourdinPeccati09}. The presence of additional terms in Theorem \ref{thm:MalliavinSteinBound} are due to the fact that on the Poisson space the Malliavin derivative $D$ is characterized as difference operator, recall \eqref{eq:DifferenceOperatorPathwise}.
\end{itemize}
\end{remark}

\begin{proof}[Proof of Theorem \ref{thm:MalliavinSteinBound}]
Fix some $z\in\cZ$ and $x \in \RR$ and denote by $f:=f_x$ the solution of the Stein equation \eqref{eq:SteinEquation}. Let us first re-write $D_zf(F)$ as
\begin{equation}\label{eq:Dzf(F)}
\begin{split} 
D_zf(F) &= f(F_z) - f(F) = \int\limits_0^{D_zF}f'(F+t)\,\dint t \\
&= \int\limits_0^{D_zF}\big(f'(F+t)-f'(F)\big)\,\dint t+(D_zF)f'(F)
\end{split}
\end{equation}
(note that this is not influenced by the fact that $f'$ only exists as a left- or right-sided derivative at $t=x$). Next, applying \eqref{eq:DeltaD=-L} and the integration-by-parts-formula \eqref{eq:IntegrationByParts} in this order yields
\begin{equation}\label{eq:ProofEEFf(F)}
\begin{split}
\EE\big[Ff(F)\big] = \EE\big[LL^{-1}Ff(F)\big] = \EE\big[\delta(-DL^{-1}F)f(F)\big] = \EE\langle Df(F),-DL^{-1}F\rangle\,.
\end{split}
\end{equation}
We now replace $w$ by $F$ in the Stein equation \eqref{eq:SteinEquation}, take expectations and use \eqref{eq:Dzf(F)} as well as \eqref{eq:ProofEEFf(F)} to see that
\begin{equation}\label{eq:ProofZwischenschritt1}
\begin{split}
&\EE\big[f'(F)-Ff(F)\big]\\
&= \EE\big[f'(F)-\langle Df(F),-DL^{-1}F\rangle\big]\\
&=\EE\big[f'(F)(1-\langle DF,-DL^{-1}F\rangle)\big]-\EE\big[\langle\int\limits_0^{DF}\big(f'(F+t)-f'(F)\big)\,\dint t,-DL^{-1}F\rangle\big]\,.
\end{split}
\end{equation}
Let us consider for fixed $z\in\cZ$ the integral in the second term. Since $f$ is a solution of the Stein equation \eqref{eq:SteinEquation}, we have that $$f'(F+t)=(F+t)f(F+t)+{\bf 1}(F+t\leq x)-\Phi(x)$$ and that $$f'(F)=Ff(F)+{\bf 1}(F\leq x)-\Phi(x)\,,$$ which leads us to
\begin{equation}\label{eq:ProofZwischenschritt2}
\begin{split}
\int\limits_0^{D_zF}\big(f'(F+t)-f'(F)\big)\,\dint t=\int\limits_0^{D_zF}&\big((F+t)f(F+t)- Ff(F)\big)\,\dint t\\
&+\int\limits_0^{D_zF}\big({\bf 1}(F+t\leq x)-{\bf 1}(F\leq x)\big)\,\dint t=:I_1+I_2\,.
\end{split}
\end{equation}
Now, the integrand in $I_1$ can be bounded by means of \eqref{eq:SteinBoundIncrements}, which yields $$|I_1| \leq \int\limits_0^{|D_zF|}\big(|F|+{\sqrt{2\pi}\over 4}\big)|t|\,\dint t = {1\over 2}(D_zF)^2\,\big(|F|+{\sqrt{2\pi}\over 4}\big)\,.$$ To bound $I_2$, we consider the cases $D_zF<0$ and $D_zF\geq 0$ separately and write
\begin{equation*}
\begin{split}
I_{2,<0}&:={\bf 1}(D_zF<0)\,\int\limits_0^{D_zF}\big({\bf 1}(F+t\leq x)-{\bf 1}(F\leq x)\big)\,\dint t\,,\\
I_{2,\geq 0}&:={\bf 1}(D_zF\geq 0)\,\int\limits_0^{D_zF}\big({\bf 1}(F+t\leq x)-{\bf 1}(F\leq x)\big)\,\dint t\,.
\end{split}
\end{equation*}
For the first term, we have \begin{equation*}
\begin{split}
I_{2,<0} &=-\int\limits_{D_zF}^0{\bf 1}(x<F\leq x-t)\,\dint t\leq-\int\limits_{D_zF}^0{\bf 1}(x<F\leq x-D_zF)\,\dint t\\
&= (D_zF)\,{\bf 1}(D_zF+F\leq x<F)\,.
\end{split}
\end{equation*}
Thus, we arrive at the following estimate for $I_{2,<0}$:
\begin{equation*}
\begin{split}
|I_{2,<0}| &\leq \big|(D_zF)\,{\bf 1}(D_zF+F\leq x<F)\,{\bf 1}(D_zF<0)\big|\\
&= \big|(D_zF)\,({\bf 1}(F>x)-{\bf 1}(D_zF+F > x))\,{\bf 1}(D_zF<0)\big|\\
&= \big|(D_zF)\,({\bf 1}(D_zF+F>x)-{\bf 1}(F > x))\,{\bf 1}(D_zF<0)\big|\\
&= \big|(D_zF)\,D_z{\bf 1}(F>x)\,{\bf 1}(D_zF<0)\big|\\
&= (D_zF)\,D_z{\bf 1}(F>x)\,{\bf 1}(D_zF<0)\,,
\end{split}
\end{equation*}
where the equality in the last line follows by considering the cases $D_zF+F,F\leq x$ and $D_zF+F\leq x<F$ separately (note that the remaining cases cannot contribute). For the second case, similar arguments lead to the upper bound $$|I_{2,\geq 0}|\leq (D_zF)D_z{\bf 1}(F>x){\bf 1}(D_zF\geq 0)\,.$$ Thus, for $I_2=I_{2,<0}+I_{2,\geq 0}$ we have that $$|I_2|\leq (D_zF)\,D_z{\bf 1}(F>x)\,.$$ Together with the bound for $I_1$ and the fact that $\big|f'(w)\big|\leq 1$ for all $w\in\RR$ we conclude from \eqref{eq:ProofZwischenschritt1} the bound
\begin{equation*}
\begin{split}
&\big|\EE\big[f'(F)-Ff(F)\big]\big| \leq \EE\big|1-\langle DF,-DL^{-1}F\rangle\big|+\EE\langle|I_1|+|I_2|,|DL^{-1}F|\rangle\\
&\leq \EE\big|1-\langle DF,-DL^{-1}F\rangle\big|+{\sqrt{2\pi}\over 8}\EE\langle(DF)^2,|DL^{-1}F|\rangle+{1\over 2}\EE\langle(DF)^2,|F\times DL^{-1}F|\rangle\\
&\qquad\qquad+\EE\langle(D_zF)\,D_z{\bf 1}(F>x),|DL^{-1}F|\rangle\,.
\end{split}
\end{equation*}
The final result follows in view of \eqref{eq:SteinKolmogorov} by taking the supremum over all $x\in\RR$.
\end{proof}

Let us draw a consequence of Theorem \ref{thm:MalliavinSteinBound}, which will in our applications below serve as kind of plug-in theorem. It provides a more convenient form of the bound for the Kolmogorov distance, which will be applied in the context of Theorem \ref{thm:MultipleIntegrals} and Theorem \ref{thm:FiniteExpansion}. 

\begin{corollary}\label{cor:BoundNachCS}
Let $F$ and $Z$ be as in Theorem \ref{thm:MalliavinSteinBound}. Then
\begin{equation*}
\begin{split}
d_K(F,Z) \leq \EE|1-\langle DF,-&DL^{-1}F\rangle| +{1\over 2}\left(\EE\langle(DF)^2,(DL^{-1}F)^2\rangle\right)^{1/2}\left(\EE\|DF\|^4\right)^{1/4}\big((\EE F^4)^{1/4}+1\big)\\
&+\sup_{x\in\RR}\EE\langle (DF)\,D{\bf 1}(F>x),|DL^{-1}F|\rangle\,.
\end{split}
\end{equation*}
\end{corollary}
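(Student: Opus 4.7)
The plan is to reduce Corollary \ref{cor:BoundNachCS} to Theorem \ref{thm:MalliavinSteinBound} by a sequence of Cauchy-Schwarz estimates applied to the two middle terms on the right-hand side of the theorem, while leaving the first and last term untouched.

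The first step is to combine the two middle terms. Since the numerical constant $\sqrt{2\pi}/8 < 1/2$, I would write
\begin{equation*}
{\sqrt{2\pi}\over 8}\EE\langle(DF)^2,|DL^{-1}F|\rangle + {1\over 2}\EE\langle(DF)^2,|F\cdot DL^{-1}F|\rangle \leq {1\over 2}\EE\langle(DF)^2,(1+|F|)|DL^{-1}F|\rangle.
\end{equation*}
The right-hand side is a single expectation of an integral over $\cZ$ against $\mu$ with integrand $(1+|F|)(D_zF)^2|D_zL^{-1}F|$, where $(1+|F|)$ does not depend on $z$ and may be pulled outside the $\mu$-integral.

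The next step is a Cauchy-Schwarz inequality in $L^2(\mu)$, pairing the factors $|D_zF|$ and $|D_zF\cdot D_zL^{-1}F|$:
\begin{equation*}
\int_\cZ (D_zF)^2|D_zL^{-1}F|\,\mu(\dint z) = \int_\cZ |D_zF|\cdot |D_zF\cdot D_zL^{-1}F|\,\mu(\dint z) \leq \|DF\|\cdot\langle(DF)^2,(DL^{-1}F)^2\rangle^{1/2}.
\end{equation*}
This gives a pointwise (in $\omega$) bound that will become
\begin{equation*}
\tfrac{1}{2}\EE\bigl[(1+|F|)\,\|DF\|\,\langle(DF)^2,(DL^{-1}F)^2\rangle^{1/2}\bigr].
\end{equation*}

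Then I would apply Cauchy-Schwarz in $\PP$ to peel off the factor $\langle(DF)^2,(DL^{-1}F)^2\rangle^{1/2}$, reducing the problem to estimating $\bigl(\EE[(1+|F|)^2\|DF\|^2]\bigr)^{1/2}$. A second Cauchy-Schwarz in $\PP$ splits this as $\bigl(\EE(1+|F|)^4\bigr)^{1/4}\bigl(\EE\|DF\|^4\bigr)^{1/4}$, and finally the triangle inequality (Minkowski) in $L^4(\PP)$ yields $(\EE(1+|F|)^4)^{1/4}\leq 1 + (\EE F^4)^{1/4}$. Assembling everything reproduces the bound in the corollary.

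The argument is essentially routine; the only place requiring a bit of care is the very first Cauchy-Schwarz step, where one must choose the right factorisation of $(D_zF)^2|D_zL^{-1}F|$ so that the $\mu$-integral produces exactly $\|DF\|$ and $\langle(DF)^2,(DL^{-1}F)^2\rangle^{1/2}$ rather than an expression like $\|DL^{-1}F\|\cdot\|(DF)^2\|$, which would not let the subsequent $\EE$-level Cauchy-Schwarz steps combine with $(1+|F|)$ to yield the $(\EE\|DF\|^4)^{1/4}\bigl((\EE F^4)^{1/4}+1\bigr)$ factor.
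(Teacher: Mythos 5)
Your proposal is correct and follows essentially the same route as the paper: bound the two middle terms of Theorem \ref{thm:MalliavinSteinBound} by ${1\over 2}\EE\langle(DF)^2,(1+|F|)|DL^{-1}F|\rangle$ using $\sqrt{2\pi}/8<1/2$, then apply Cauchy-Schwarz (with respect to $\mu$ and $\PP$) together with Minkowski's inequality in $L^4(\PP)$. The only cosmetic difference is that you split the Cauchy-Schwarz step over $\mu$ and $\PP$ separately rather than over the product measure at once, which yields the identical bound.
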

\begin{proof}
Since $\sqrt{2\pi}/8<1/2$, the result follows immediately by applying to $${\sqrt{2\pi}\over 8}\,\EE\langle(DF)^2,|DL^{-1}F|\rangle+{1\over 2}\EE\langle(DF)^2,|F\times DL^{-1}F|\rangle\leq{1\over 2}\EE\langle(DF)^2,(1+|F|)|DL^{-1}F|\rangle$$ twice the Cauchy-Schwarz inequality and the Minkowski inequality, then by using the bound provided by Theorem \ref{thm:MalliavinSteinBound}.
\end{proof}


As a particular case, let us consider a multiple integral of arbitrary order:

\begin{example}\label{ex:singularintegral}
\rm
If $F$ has the form $F=I_q(f)$, with $q \geq 1$ and $f \in L^2(\mu^q)$ symmetric, we have by definition that $L^{-1}F = - \frac 1q I_q(f)$
and hence $\langle DF, -DL^{-1}F \rangle = \frac 1q \|DF \|^2$. The second term of the bound in Theorem \ref{thm:MalliavinSteinBound} reads as $ \frac 1q \int_{\cZ} \EE [|D_zF|^3] \, \mu(\dint z)$, multiplied by the constant $\frac{\sqrt{2 \pi}}{8}$, the third term is given by $ \frac 1q \int_{\cZ} \EE [ |F| \, |D_zF|^3] \, \mu(\dint z)$ and the fourth equals  $\sup_{x\in\RR} \frac1q \EE\langle (DF)(D{\bf 1}(F>x)),|DF|\rangle\,$. Moreover, we obtain
$$
{1\over 2}\left(\EE\langle(DF)^2,(DL^{-1}F)^2\rangle\right)^{1/2} = \frac{1}{2q} \left( \int_{\cZ} \EE [ (D_zF)^4] \, \mu(\dint z) \right)^{1/2}.
$$
Applying Jensen's inequality we find $\EE\|DF\|^4 \leq \int_{\cZ} \EE [ (D_zF)^4] \, \mu(\dint z)$. Hence, the
second term of the bound in Corollary \ref{cor:BoundNachCS} can be estimated from above by
$$
\frac{1}{2q} \left( \int_{\cZ} \EE [ (D_zF)^4] \, \mu(\dint z) \right)^{3/4} \, \big((\EE F^4)^{1/4}+1\big)\,.
$$
This set-up will further be exploited in Section \ref{sec:multipleintegrals} below. We refer the reader to \cite[Theorem 3.1]{PSTU2010} for a similar bound for the Wasserstein distance between $I_q(f)$ and $Z$.
\end{example}

\section{Applications}\label{sec:applications}

\subsection{Multiple integrals}\label{sec:multipleintegrals}

In this section we consider a sequence of multiple integrals $F_n:=I_q(f_n)$ for a fixed integer $q\geq 2$ and with functions $f_n\in L^2(\mu^q)$ satisfying the technical assumptions presented in Section 2. Moreover, we shall assume that for each $n\in\NN$, $\eta_n$ is a Poisson random measure on $(\cZ,\sZ)$ with control $\mu_n$, where for each $n\in\NN$, $\mu_n$ is a $\sigma$-finite measure on $\cZ$. In what follows, norms and scalar products involving functions $f_n$ are always taken with respect to $\mu_n$.

\begin{theorem}\label{thm:MultipleIntegrals}
Assume the set-up described above (in particular that $\{f_n:n\in\NN\}$ is a sequence of symmetric functions satisfying the technical assumptions) and suppose that
\begin{equation}\label{eq:AssumptionsContractions}
 \lim\limits_{n\to\infty}q!\|f_n\|^2=1\qquad{\rm and}\qquad \lim_{n\to\infty}\|f_n\s_r^\ell f_n\|=0
 \end{equation}
for all pairs $(r,\ell)$ such that either $r=q$ and $\ell=0$ or $r\in\{1,\ldots,q\}$ and $\ell\in\{0,\ldots,\min(r,q-1)\}$. Then $F_n$ converges in distribution to a standard Gaussian random variable $Z$ and for any $n\in\NN$ we have the following bound on the Kolmogorov distance between $F_n$ and $Z$: $$d_K(F_n,Z)\leq C\times\max\left\{|1-q!\|f_n\|^2\big|,\|f_n\s_r^\ell f_n\|,\|f_n\s_r^\ell f_n\|^{3/2}\right\}$$ with a constant $C>0$ only depending on $q$ and where the maximum runs over all $r$ and $\ell$ such that either $r=q$ and $\ell=0$ or $r\in\{1,\ldots,q\}$ and $\ell\in\{1,\ldots,\min(r,q-1)\}$.
\end{theorem}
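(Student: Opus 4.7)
My plan is to specialise Corollary~\ref{cor:BoundNachCS} to $F = F_n = I_q(f_n)$ and to bound each of its three summands in terms of $|1-q!\|f_n\|^2|$ and $\max_{r,\ell}\|f_n\s_r^\ell f_n\|$. Since $F_n$ lives in the $q$-th chaos, $L^{-1}F_n = -\tfrac{1}{q}F_n$, so $-DL^{-1}F_n = \tfrac{1}{q}DF_n$ and $D_zF_n = qI_{q-1}(f_n(z,\cdot))$. Once the quantitative bound $d_K(F_n,Z)\to 0$ is established, convergence in distribution follows from the absolute continuity of the Gaussian law.

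For the first summand, $\EE|1 - \tfrac{1}{q}\|DF_n\|^2|$, I would use $\EE[\tfrac{1}{q}\|DF_n\|^2] = q!\|f_n\|^2$ (Wiener-It\^o isometry) and the triangle inequality to reduce it to $|1-q!\|f_n\|^2| + \Var(\tfrac{1}{q}\|DF_n\|^2)^{1/2}$. Writing $\|DF_n\|^2 = q^2 \int I_{q-1}(f_n(z,\cdot))^2 \mu_n(\dint z)$ and expanding the integrand through the product formula~\eqref{eq:ProductFormula} produces a finite sum of multiple Wiener-It\^o integrals of orders $2q-2-r-\ell$; integrating in $z$ and using the orthogonality of chaoses of different orders reduces the variance to a finite linear combination of $\|f_n\ts_r^\ell f_n\|^2$, which in turn is bounded by $C(q)\max_{r,\ell}\|f_n\s_r^\ell f_n\|^2$ via a standard symmetrisation estimate.

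For the second summand of Corollary~\ref{cor:BoundNachCS}, Example~\ref{ex:singularintegral} reduces it to $\tfrac{1}{2q}(\EE\!\int(D_zF_n)^4\mu_n(\dint z))^{3/4}((\EE F_n^4)^{1/4}+1)$. Applying the product formula to $F_n^2$ together with orthogonality expresses $\EE F_n^4$ as a linear combination of $(q!\|f_n\|^2)^2$ and squared contractions, all uniformly bounded under~\eqref{eq:AssumptionsContractions}, so $(\EE F_n^4)^{1/4}+1$ stays uniformly bounded. Analogously, expanding $I_{q-1}(f_n(z,\cdot))^2$ by the product formula, taking expectations after squaring and integrating in $z$ expresses $\EE\!\int(D_zF_n)^4\mu_n(\dint z)$ as a linear combination of squared contraction norms; raising the result to the $3/4$-th power produces the $\max_{r,\ell}\|f_n\s_r^\ell f_n\|^{3/2}$ contribution.

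The hard part is the indicator term. A case analysis on the sign of $D_zF_n$ shows that $D_zF_n\cdot D_z{\bf 1}(F_n>x)\geq 0$ for all $z,x$, whence it equals $\tfrac{1}{q}\sup_x\EE\int(D_zF_n)^2|D_z{\bf 1}(F_n>x)|\mu_n(\dint z)$. A Cauchy-Schwarz estimate together with $|D_z{\bf 1}(F_n>x)|^2 = |D_z{\bf 1}(F_n>x)|$ bounds this by
\[
\tfrac{1}{q}\Bigl(\EE\!\int(D_zF_n)^4\mu_n(\dint z)\Bigr)^{1/2}\Bigl(\EE\!\int|D_z{\bf 1}(F_n>x)|\mu_n(\dint z)\Bigr)^{1/2},
\]
the first factor being already controlled by $\max_{r,\ell}\|f_n\s_r^\ell f_n\|$ from the previous step. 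To handle the second factor I would invoke the modified integration-by-parts formula~\eqref{eq:IntegrationByPartsModified} with $h = DF_n$ (whose finite chaotic expansion and sign condition both hold) to obtain the identity $\EE\int|D_zF_n||D_z{\bf 1}(F_n>x)|\mu_n(\dint z) = q\EE[F_n{\bf 1}(F_n>x)]$, uniformly bounded in $x$ by $q\sqrt{q!\|f_n\|^2}$. Transferring this weighted identity into a uniform bound on $\EE\int|D_z{\bf 1}(F_n>x)|\mu_n(\dint z)$ will require a further Cauchy-Schwarz step together with the isometric formula~\eqref{eq:IsometricFormula} for the Skorohod integral applied, for instance, to $(DF_n)^3$ (which lies in $\dom(\delta)$ by the technical assumptions of Section~\ref{sec:preliminaries} and satisfies $D_z{\bf 1}(F_n>x)(D_zF_n)^3\geq 0$); this transfer step is the technically most delicate point of the proof and is precisely where the present approach differs from and improves upon the one in \cite{SchulteKolmogorov}. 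Combining the three contributions and taking the maximum yields the stated Kolmogorov bound.
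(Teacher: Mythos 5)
Your treatment of the first three ingredients is sound and matches the paper's route: Corollary \ref{cor:BoundNachCS} with $-DL^{-1}F_n=\frac1q DF_n$, the variance/contration bound for $\EE|1-\frac1q\|DF_n\|^2|$, the reduction of the second summand to $\EE\int_{\cZ}(D_zF_n)^4\,\mu_n(\dint z)$ and the boundedness of $(\EE F_n^4)^{1/4}+1$ (the paper simply quotes the contraction estimates from \cite{PSTU2010} rather than re-deriving them by the product formula, which is only a matter of presentation). The genuine gap is in the indicator term $A_4$. After the correct observation that $(D_zF_n)\,D_z{\bf 1}(F_n>x)\geq 0$, you apply Cauchy--Schwarz so as to decouple the indicator from its weight, leaving the factor $\sup_x\EE\int_{\cZ}|D_z{\bf 1}(F_n>x)|\,\mu_n(\dint z)$. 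This quantity is not uniformly bounded: $|D_z{\bf 1}(F_n>x)|$ equals one exactly when adding the point $z$ pushes $F_n$ across the level $x$, and although this probability is small for each fixed $z$, it is integrated against $\mu_n$, whose total mass typically diverges (e.g.\ $\mu_n=n\mu$); in the standard normalized examples the integral grows like a positive power of $n$, so this splitting cannot produce the claimed bound in terms of contraction norms (at best a suboptimal rate). The identity $\EE\int_{\cZ}|D_zF_n|\,|D_z{\bf 1}(F_n>x)|\,\mu_n(\dint z)=q\,\EE[F_n{\bf 1}(F_n>x)]$, which you get from \eqref{eq:IntegrationByPartsModified} with $h=DF_n$, carries only one power of $|D_zF_n|$ and does not dominate the unweighted integral; and the proposed transfer via $\delta((DF_n)^3)$ is left as a sketch and would not close the gap as stated, since $(D_zF_n)^2\leq|D_zF_n|^3$ only where $|D_zF_n|\geq1$.

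The way out, and the paper's decisive step, is never to separate the indicator from its weight. With $\Xi(u):=u|u|$, the sign property gives $A_4(F_n)=\frac1q\sup_x\EE\int_{\cZ}(D_z{\bf 1}(F_n>x))\,\Xi(D_zF_n)\,\mu_n(\dint z)$, and the very sign condition you verified is what legitimates \eqref{eq:IntegrationByPartsModified} with $h=\Xi(DF_n)$. This yields $A_4(F_n)\leq\frac1q\sup_x\EE\big[{\bf 1}(F_n>x)\,\delta(\Xi(DF_n))\big]\leq\frac1q\big(\EE\big[\delta(\Xi(DF_n))^2\big]\big)^{1/2}$, the indicator being bounded by one only \emph{after} the integration by parts, so no unbounded factor ever appears. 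The Skorohod isometry \eqref{eq:IsometricFormula} then reduces the problem to $\EE\int_{\cZ}(D_zF_n)^4\,\mu_n(\dint z)$ plus second differences $D_{z_2}\Xi(D_{z_1}F_n)$, which are controlled by the elementary inequality $(\Xi(v)-\Xi(u))^2\leq 8u^2(v-u)^2+2(v-u)^4$; the resulting terms are bounded by linear combinations of norms $\|f_n\s_a^b f_n\|$ and their squares, as in \cite{PecTh13}, which produces exactly the $\max\{\|f_n\s_r^\ell f_n\|,\|f_n\s_r^\ell f_n\|^{3/2}\}$ contribution of the statement. Without this step your argument does not establish the theorem.
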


\begin{remark}\rm
\begin{itemize}
\item Note that the assumption and the estimate for the Kolmogorov distance in Theorem \ref{thm:MultipleIntegrals} involve the contraction kernel $f_n\s_q^0 f_n=f_n^2$. In particular, the condition that $\|f_n^2\|\to 0$ as $n \to \infty$ is actually a condition on the $L^4$-norm of $f_n$.
\item Under condition \eqref{eq:AssumptionsContractions} we have that $\|f_n\s_r^\ell f_n\|^{3/2}$ is smaller than $\|f_n\s_r^\ell f_n\|$ for sufficiently large indices $n$ so that $d_K(F_n,Z)$ is asymptotically dominated by $\|f_n\s_r^\ell f_n\|$ or the variance difference $|\var(Z)-\var(F_n)|=|1-q!\|f_n\|^2\big|$.
\item It is worth comparing our bound with the one from \cite[Theorem 4.2]{PSTU2010} for the Wasserstein distance: $$d_W(F_n,Z)\leq C_W\times \max\left\{|1-q!\|f_n\|^2\big|,\|f_n\s_r^\ell f_n\|\right\}$$ with a constant $C_W>0$ only depending on $q$ and where the maximum is running over all $(r,\ell)$ satisfying $r=q$ and $\ell=0$ or $r\in\{1,\ldots,q\}$ and $\ell\in\{1,\ldots,\min(r,q-1)\}$. Thus, $d_W(F_n,Z)$ coincides with $d_K(F_,Z)$ up to a constant multiple under condition \eqref{eq:AssumptionsContractions} for sufficiently large $n$. See also \cite[Theorem 3.5]{LRP}.
\item A bound for $d_K(F_n,Z)$ with $F_n=I_q(f_n)$ as in Theorem \ref{thm:MultipleIntegrals} could in principle also be derived using the techniques provided by \cite{SchulteKolmogorov}. However, this leads to an expression which is systematically larger than ours as it involves contractions of the \textit{absolute value} of $f_n$.
\item Similar statements for sequences of multiple integrals with respect to a Gaussian random measure can be found in \cite[Proposition 3.2]{NourdinPeccati09} for instance. In this case, it is sufficient that $q!\|f_n\|^2\to 1$ and that $\|f_n\s_r^r f_n\|\to 0$, as $n\to\infty$, to conclude a central limit theorem for them. Note, that in the Poisson case, assumption \eqref{eq:AssumptionsContractions} involves also contractions $f_n\s_r^\ell f_n$ with $r\neq\ell$, which for general $f_n$ seems unavoidable (see however \cite{PeccatiZhengUniversality} for the case of so-called homogeneous sums, where it suffices to control $\|f_n\s_r^r f_n\|$). 
\end{itemize}
\end{remark}

\begin{proof}[Proof of Theorem \ref{thm:MultipleIntegrals}]
Let us introduce the sequences
\begin{equation*}
\begin{split}
A_1(F_n) := & \EE|1-\langle DF_n,-DL^{-1}F_n\rangle|\,,\\
A_2(F_n) := & \left(\EE\langle(DF_n)^2,(DL^{-1}F_n)^2\rangle\right)^{1/2}\,,\\
A_3(F_n) := & \left(\EE\|DF_n\|^4\right)^{1/4}\times \big((\EE F_n^4)^{1/4}+1\big)\,,\\
A_4(F_n) := & \sup_{x\in\RR}\EE\langle (DF_n)(D{\bf 1}(F_n>x)),|DL^{-1}F_n|\rangle\,,
\end{split}
\end{equation*}
where here and below $F_n$ stands for $I_q(f_n)$ (recall that in our set-up the norms and scalar products are with respect to $\mu_n$). Then Corollary \ref{cor:BoundNachCS} delivers the bound $$d_K(F_n,Z)\leq A_1(F_n)+{1\over 2}A_2(F_n)\times A_3(F_n)+A_4(F_n)\,.$$ 
Thus, we shall show that $A_1(F_n)$, $A_2(F_n)\times A_3(F_n)$, and $A_4(F_n)$ vanish asymptotically, as $n\to\infty$. For $A_1(F_n)$, we use Theorem 4.2 in \cite{PSTU2010}, in particular Equation (4.14) ibidem, to see that
\begin{equation*}
\begin{split}
A_1(F_n) \leq \big|1-q!\|f_n\|^2\big|+q\sum_{r=1}^q\sum_{\ell=1}^{\min(r,q-1)}&{\bf 1}(2\leq r+\ell\leq 2q-1)\,\big((2q-r-\ell)!\big)^{1/2}\,(r-1)!\\
&\times{q-1\choose r-1}^2{r-1\choose \ell-1}\,\|f_n\s_r^{\ell}f_n\|\,.
\end{split}
\end{equation*}
Next, for $A_2(F_n)$ we observe that
\begin{equation}\label{eq:A2Simplification}
\EE\langle(DF_n)^2,(DL^{-1}F_n)^2\rangle=q^{-2} \, \EE \int\limits_{\cZ}(D_zF_n)^4\,\mu_n(\dint z)\,,
\end{equation}
using that, by definition, $DL^{-1}F_n = - \frac 1q D F_n$ (see Example \ref{ex:singularintegral}).
Hence, we can apply again Theorem 4.2 in \cite{PSTU2010}, this time Equation (4.32) and (4.18) ibidem, to deduce the bound
\begin{equation*}
\begin{split}
A_2(F_n) \leq q\,\sum_{r=1}^q\sum_{\ell=0}^{r-1} &{\bf 1}(1\leq r+\ell\leq 2q-1)\,\big((r+\ell-1)!\big)^{1/2}\,(q-\ell-1)! \\
& \times{q-1\choose q-1-\ell}^2{q-1-\ell\choose q-r}\,\|f_n\s_r^\ell f_n\|\,.
\end{split}
\end{equation*}
Concerning $A_3(F_n)$, let us first write $$A_3(F_n)=\left(\EE\|DF_n\|^4\right)^{1/4}\times\big((\EE F_n^4)^{1/4}+1\big)=:A_3^{(1)}(F_n)\times A_3^{(2)}(F_n)\,.$$
Now, use Jensen's inequality to see that
$$A_3^{(1)}(F_n)^4=\EE\Big(\;\int\limits_{\cZ}(D_zF_n)^2\,\mu_n(\dint z)\Big)^2\leq\EE \int\limits_{\cZ} (D_zF_n)^4\,\mu_n(\dint z)  
= (q \, A_2(F_n))^2\,.$$ 
Hence, we conclude that $A_3^{(1)}(F_n) \leq  \bigl( q \, A_2(F_n) \bigr)^{1/2}$. Moreover $A_3^{(2)}(F_n)$ is a bounded sequence, since the functions $f_n$ satisfy the technical assumptions. Finally, let us consider the sequence $A_4(F_n)$. We will adapt in parts the strategy of the proof of Proposition 2.3 in \cite{PecTh13} to derive a bound for $A_4(F_n)$. First, define the mapping $u\mapsto\Xi(u):=u|u|$ from $\RR$ to $\RR$ and observe that it satisfies the estimate
\begin{equation}\label{eq:EstimateXi}
\big(\Xi(v)-\Xi(u)\big)^2\leq 8u^2(v-u)^2+2(v-u)^4
\end{equation}
for all $u,v\in\RR$. To apply the modified integration-by-parts-formula \eqref{eq:IntegrationByPartsModified} we need to check that $D_z{\bf 1}(F_n>x)\,\Xi(D_zF_n)\,|DL^{-1}F_n|\geq 0$. Therefore and in view of the definition of $\Xi$, it is sufficient to show that $(D_z{\bf 1}(F_n>x))(D_zF_n)\geq 0$. To prove this, consider the two cases $F\leq D_zF+F$ and $F>D_zF+F$ separately. In the first case we have $D_zF\geq 0$ and $D_z{\bf 1}(F_n>x)\in\{0,1\}$, whereas in the second case it holds that $D_zF< 0$ along with $D_z{\bf 1}(F_n>x)\in\{-1,0\}$. Thus, $(D_z{\bf 1}(F_n>x))(D_zF_n)\geq 0$, and hence $D_z{\bf 1}(F_n>x)\,\Xi(D_zF_n)\,|DL^{-1}F_n|\geq 0$.
This allows us to apply the modified integration-by-parts-formula \eqref{eq:IntegrationByPartsModified} and to conclude that
\begin{equation}\label{eq:BoundA3IntermediateEstimate}
\begin{split}
A_4(F_n) &= {1\over q}\EE\int\limits_{\cZ}(D_z{\bf 1}(F_n>x))\,\Xi(D_zF_n)\,\mu_n(\dint z)\\
&= {1\over q}\EE\big[{\bf 1}(F_n>x)\,\d(\Xi(DF_n))\big]\\
&\leq {1\over q}\big(\EE\big[\d(\Xi(DF_n))^2\big]\big)^{1/2}\,.
\end{split}
\end{equation}
Now, the Skorohod isometric formula \eqref{eq:IsometricFormula} yields
\begin{equation}\label{eq:BoundA3IntermediateEstimate2}
\begin{split}
\EE\big[\d(\Xi(DF_n))^2\big] \leq \EE\int\limits_{\cZ}\Xi(D_zF_n)^2\,\mu_n(\dint z)+\EE\int\limits_{\cZ}\int\limits_{\cZ}\big(D_{z_2}\Xi(D_{z_1}F_n)\big)^2\,\mu_n(\dint z_1)\mu_n(\dint z_2)\\
= \EE\int\limits_{\cZ}(D_zF_n)^4\,\mu_n(\dint z)+\EE\int\limits_{\cZ}\int\limits_{\cZ}\big(D_{z_2}\Xi(D_{z_1}F_n)\big)^2\,\mu_n(\dint z_1)\mu_n(\dint z_2)\,.
\end{split}
\end{equation}
Since $D_{z_2}\Xi(D_{z_1}F_n)=\Xi(D_{z_1}F_n+D_{z_2}D_{z_1}F_n)-\Xi(D_{z_1}F_n)$, we can apply \eqref{eq:EstimateXi} with $u=D_{z_1}F_n$ and $v=D_{z_2}D_{z_1}F_n + D_{z_1}F_n$ 
there, to see that $$\big(D_{z_2}\Xi(D_{z_1}F_n)\big)^2\leq 8(D_{z_1}F_n)^2(D_{z_2}D_{z_1}F_n)^2+2(D_{z_2}D_{z_1}F_n)^4\,.$$ Combining this with \eqref{eq:BoundA3IntermediateEstimate} and\eqref{eq:BoundA3IntermediateEstimate2} gives
\begin{equation*}
\begin{split}
A_4(F_n) \leq &{2\sqrt{2}\over q}\Bigg(\Big(\EE\int\limits_{\cZ}(D_zF_n)^4\,\mu_n(\dint z)\Big)^{1/2}+\Big(\EE\int\limits_{\cZ}\int\limits_{\cZ}(D_{z_1}F_n)^2(D_{z_2}D_{z_1}F_n)^2\,\mu_n(\dint z_1)\mu_n(\dint z_2)\Big)^{1/2}\\
&\qquad\qquad +\Big(\EE\int\limits_{\cZ}\int\limits_{\cZ}(D_{z_2}D_{z_1}F_n)^4\,\mu_n(\dint z_1)\mu_n(\dint z_2)\Big)^{1/2}\Bigg)\\
&=: {2\sqrt{2}\over q}\left(A_4^{(1)}(F_n)+A_4^{(2)}(F_n)+A_4^{(3)}(F_n)\right)\,.
\end{split}
\end{equation*}
Clearly, $A_4^{(1)}(F_n)= q A_2(F_n)$, 
recall \eqref{eq:A2Simplification}. As seen in the proof of Lemma 4.3 in \cite{PecTh13}, the term $A_4^{(3)}(F_n)$ can be bounded by linear combinations of quantities of the type $\|f_n\s_a^bf_n\|$ with $a\in\{2,\ldots,q\}$ and $b\in\{0,\ldots,a-2\}$. Moreover, the middle term $A_4^{(2)}(F_n)$ can, by means of the Cauchy-Schwarz inequality, be estimated as follows: $$A_4^{(2)}(F_n)\leq \big(q \, A_2(F_n)\big)^{1/2}\times \big(A_4^{(4)}(F_n)\big)^{1/4}$$ 
with $A_4^{(4)}(F_n)$ given by $$A_4^{(4)}(F_n)=\EE\int\limits_{\cZ}\Big(\;\int\limits_{\cZ}(D_{z_2}(D_{z_1}F_n))^2\,\mu_n(\dint z_2)\Big)^2\,\mu_n(\dint z_1)\,.$$ Arguing again as at \cite[Page 549]{PecTh13}, one infers that $A_4^{(4)}(F_n)$ is bounded by linear combinations of $\|f_n\s_a^b f_n\|^2$ with $a$ and $b$ as above.

Consequently, our assumptions \eqref{eq:AssumptionsContractions} imply that $d_K(F_n,Z)\to 0$, as $n\to\infty$. This yields the desired convergence in distribution of $F_n$ to $Z$. The precise bound for $d_K(F_n,Z)$ follows implicitly from the computations performed above. 
\end{proof}

\begin{corollary}\label{cor:4thMomentBound}
Fix an integer $q\geq 2$ and assume that $\{f_n:n\in\NN\}$ is a sequence of non-negative, symmetric functions in $L^2(\mu^q)$ which satisfy the technical assumptions. In addition, suppose that $\EE I_q^2(f_n)=1$ for all $n\in\NN$. Then, there is a constant $C>0$ only depending on $q$ such that for sufficiently large $n$,
\begin{equation}\label{eq:Ungleichung4thMoment}
d_K(I_q(f_n),Z)\leq C\times\sqrt{\EE I_q^4(f_n)-3}\,,
\end{equation}
where $Z$ is a standard Gaussian random variable. Moreover, if the sequence $\{I_q(f_n):n\in\NN\}$ is uniformly integrable, then $I_q(f_n)$ converges in distribution to a standard Gaussian random variable if and only if $\EE I_q^4(f_n)$ converges to $3$.
\end{corollary}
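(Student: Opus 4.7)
The plan is to combine Theorem \ref{thm:MultipleIntegrals} with a ``fourth moment'' inequality tailored to non-negative kernels. Under the normalization $q!\|f_n\|^2 = \EE I_q^2(f_n) = 1$, the variance-defect term $|1-q!\|f_n\|^2|$ appearing in the bound of Theorem \ref{thm:MultipleIntegrals} vanishes identically, so the estimate collapses to
\[
d_K(I_q(f_n), Z) \leq C \max_{(r,\ell)} \bigl\{\|f_n \s_r^\ell f_n\|,\, \|f_n \s_r^\ell f_n\|^{3/2}\bigr\},
\]
the maximum running over the admissible pairs $(r,\ell)$. Once one knows that the contractions tend to zero, the $3/2$-power terms are dominated by the linear ones for $n$ large enough, so only the linear contributions matter.

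The crux of the proof is the auxiliary inequality
\[
\|f_n \s_r^\ell f_n\|^2 \leq c_{q,r,\ell}\,\bigl(\EE I_q^4(f_n) - 3\bigr), \qquad f_n \geq 0,
\]
with constants $c_{q,r,\ell}>0$ depending only on $q,r,\ell$. To derive it I would apply the product formula \eqref{eq:ProductFormula} to expand $I_q(f_n)^2 = \sum_k I_k(g_k^{(n)})$ as an orthogonal sum, where each kernel $g_k^{(n)}$ is a linear combination with non-negative combinatorial coefficients of the symmetrized contractions $f_n \ts_r^\ell f_n$. Orthogonality of multiple integrals then gives $\EE I_q^4(f_n) = \sum_k k!\,\|g_k^{(n)}\|^2$. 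A combinatorial rearrangement, guided by the Isserlis/Wick structure of the Gaussian fourth moment, identifies the value $3 = 3(q!\|f_n\|^2)^2$ as a specific non-negative combination of these summands, so that the difference $\EE I_q^4(f_n) - 3$ is itself expressible as a non-negative linear combination of squared $L^2$-norms $\|f_n \ts_r^\ell f_n\|^2$ together with pairwise inner products $\langle f_n \ts_r^\ell f_n,\, f_n \ts_{r'}^{\ell'} f_n\rangle$. The sign hypothesis now enters decisively: because $f_n \geq 0$, each symmetrized contraction $f_n \ts_r^\ell f_n$ is non-negative, hence every cross inner product is non-negative, and so each individual squared norm is majorized by the whole sum, i.e.\ by $\EE I_q^4(f_n) - 3$. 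A short averaging argument, again exploiting $f_n \geq 0$ (all permuted copies of $f_n \s_r^\ell f_n$ have pairwise non-negative $L^2$-inner products), upgrades the bound to the unsymmetrized kernel: $\|f_n \s_r^\ell f_n\|^2 \leq (2q-r-\ell)!\,\|f_n \ts_r^\ell f_n\|^2$.

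Substituting into the reduced form of the Theorem \ref{thm:MultipleIntegrals} bound yields $d_K(I_q(f_n), Z) \leq C \sqrt{\EE I_q^4(f_n) - 3}$ for $n$ sufficiently large, which is \eqref{eq:Ungleichung4thMoment}. For the equivalence, the implication $\EE I_q^4(f_n) \to 3 \Rightarrow I_q(f_n) \to Z$ in distribution is immediate from this bound; conversely, the stated uniform integrability, combined with the a priori boundedness of $\{\EE I_q^4(f_n)\}$ granted by the technical assumptions, allows one to upgrade convergence in distribution to convergence of the fourth moment, giving $\EE I_q^4(f_n) \to \EE Z^4 = 3$. The main obstacle is the derivation of the contraction-to-fourth-moment inequality: without the sign assumption $f_n \geq 0$, the cross-terms in the expansion of $\EE I_q^4(f_n)$ can have arbitrary sign and may cancel the individual squared-norm contributions, so the non-negativity hypothesis is genuinely indispensable.
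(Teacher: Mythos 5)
Your argument is correct and, in substance, it follows the same route as the paper: Theorem \ref{thm:MultipleIntegrals} with the variance term killed by $q!\|f_n\|^2=\EE I_q^2(f_n)=1$, combined with a bound of the contraction norms by $\EE I_q^4(f_n)-3$ that is valid for non-negative kernels. The difference is that the paper's proof is a two-line citation — the contraction-to-fourth-moment inequality is exactly Proposition 3.8 of \cite{LRP}, and the equivalence under uniform integrability is Theorem 3.12(3) of \cite{LRP} — whereas you reconstruct the cited inequality: expand $I_q(f_n)^2$ via the product formula \eqref{eq:ProductFormula}, use orthogonality to write $\EE I_q^4(f_n)$ as $(q!\|f_n\|^2)^2$ plus a sum of $k!\|g_k^{(n)}\|^2$, extract $2(q!\|f_n\|^2)^2$ from the order-$2q$ term, and invoke $f_n\geq 0$ to make all cross inner products of contractions non-negative, so that each $\|f_n\ts_r^\ell f_n\|^2$ — and, via your permutation-averaging step, each unsymmetrized $\|f_n\s_r^\ell f_n\|^2\leq (2q-r-\ell)!\,\|f_n\ts_r^\ell f_n\|^2$ — is dominated by a constant multiple of $\EE I_q^4(f_n)-3$. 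This is essentially the proof of \cite[Proposition 3.8]{LRP}, and your identification of where positivity is indispensable (signs of the cross terms; comparison of symmetrized with unsymmetrized contractions) is exactly right.

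Two small caveats. First, your domination of the $3/2$-power terms by the linear ones ``for $n$ large'' presupposes that the contractions tend to zero, i.e.\ implicitly that $\EE I_q^4(f_n)\to 3$; the paper's phrase ``for sufficiently large $n$'' carries the same implicit reading, so this is inherited rather than introduced. Second, for the converse implication in the last assertion you need uniform integrability of the fourth powers $I_q(f_n)^4$: convergence in distribution together with boundedness of the fourth moments only gives convergence of moments of order strictly less than $4$, so the boundedness granted by the technical assumptions does not by itself upgrade the stated uniform integrability of $\{I_q(f_n)\}$ to fourth-moment convergence. This is what the cited Theorem 3.12(3) of \cite{LRP} actually requires; the corollary's wording is loose on this point and your sketch inherits that looseness.
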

\begin{proof}
The first part follows directly by combining Proposition 3.8 in \cite{LRP} with Theorem \ref{thm:MultipleIntegrals}. 
The second part is Theorem 3.12 (3) in \cite{LRP}.
\end{proof}

\begin{remark}\rm
\begin{itemize}
\item Corollary \ref{cor:4thMomentBound} should be compared with the following result from \cite{NualartPeccati}: Let for some integer $q\geq 2$, $I_q^G(f_n)$ be a sequence of multiple integrals with respect to a Gaussian random measure on $\cZ$ such that for each, $n\in\NN$, $f_n\in L^2(\mu^q)$ is symmetric (but not necessarily non-negative). In addition, suppose that $\EE I_q^G(f_n)^2=1$. Then the convergence in distribution of $I_q^G(f_n)$ to a standard Gaussian random variable is equivalent to the convergence of $\EE[I_q^G(f_n)^4]$ to $3$.
\item The fourth moment criterion stated in Corollary \ref{cor:4thMomentBound} is in the spirit of fourth moment criteria for central limit theorems of Gaussian multiple integrals first obtained in \cite{NualartPeccati} and recalled above. They have attracted considerable interest in recent times and we refer to the webpage $$\text{http://www.iecn.u-nancy.fr/$\sim$nourdin/steinmalliavin.htm}$$ for an exhaustive collection of works in this direction.
\item Inequality \eqref{eq:Ungleichung4thMoment} with Kolmogorov distance $d_K$ replaced by Wasserstein distance $d_W$ has been proved in \cite{LRP}, see Equation (3.9) ibidem.
\item If we have $\EE I_q^2(f_n)\to 1$, as $n\to\infty$, instead of $\EE I_q^2(f_n)=1$, then \eqref{eq:Ungleichung4thMoment} has to be replaced by $d_K(I_q(f_n),Z)\leq C\times\sqrt{\EE I_q^4(f_n)-3(\EE I_q^2(f_n))^2}$. However, this generalization will not be needed in our applications below.
\item The second assertion of Corollary \ref{cor:4thMomentBound} remains true without the assumption that the functions $f_n$ are non-negative in the case of double Poisson integrals (i.e.\ if $q=2$). This is the main result in \cite{PeccatiTaqquDoubleIntegrals}. Because of the involved structure of the fourth moment of a Poisson multiple integral (resulting from a highly technical so-called diagram formula, see \cite{PeccatiTaqquBook}), it is not clear weather a similar result should also be expected for $q>2$.
\end{itemize}
\end{remark}

\subsection{A quantitative version of de Jong's theorem}\label{sec:deJong}

Let ${\bf Y}=\{Y_i:i\in\NN\}$ be a sequence of i.i.d.\ random variables in $\RR^d$ for some $d\geq 1$ whose distribution has a Lebesgue density $p(x)$. Moreover, let, independently of ${\bf Y}$, $\{N_n:n\in\NN\}$ be a sequence of random variables such that each member $N_n$ follows a Poisson distribution with mean $n$. Then, for each $n\in\NN$, $$\eta_n:=\sum_{i=1}^{N_n}\d_{Y_i}$$ is a Poisson random measure on $\cZ =\RR^d$ (equipped
with the standard Borel $\sigma$-field) with (finite) control $\mu_n(\dint x)=np(x)\dint x$ (where $\dint x$ stands for the infinitesimal element of the Lebesgue measure in $\RR^d$). For convenience we put $\mu:=\mu_1$. Next, let for each $n\in\NN$, $h_n:\RR^{2d}\to\RR$ be a non-zero, symmetric function, which is integrable with respect to $\mu^2$. By a sequence of (bivariate) U-statistics (sometimes called Poissonized $U$-statistics) based on these data we understand a sequence $\{U_n:n\in\NN\}$ of Poisson functionals of the form $$U_n:=\sum_{(Y,Y')\in\eta_{n,\neq}^2}h_n(Y,Y')\,,$$ where $\eta_{n,\neq}^2$ is the set of all distinct pairs of points of $\eta_n$. $h_n$ is called kernel function of $U_n$. We shall assume that these U-statistics are completely degenerate in the sense that for any $n \in \NN$,
$$\int\limits_{\RR^d} h_n(x,y)\, \mu(\dint x) = \int\limits_{\RR^d} h_n(x,y)\,p(x)\,\dint x=0\qquad\mu{\rm -a.e.}$$ It is well known that a completely degenerated $U_n$ can be represented as $U_n=I_2(f_{2,n})$ with $f_{2,n}=h_n$. 
It is a direct consequence of \eqref{eq:Varianzformel} that $$\var(U_n) = 2 n^2 \,\EE[h_n(Y_1,Y_2)^2]\,,$$
where the expectation $\EE$ is the integral with respect to $\mu^2$.
Let us also introduce the normalized U-statistic $F_n:=U_n/\sqrt{\var(U_n)}$. Our main result in this section is a quantitative version of de Jong's theorem \cite{DeJonge} for such U-statistics.

\begin{theorem}\label{thm:DeJonge}
Let $\{h_n:n\geq 1\}$ be as above and suppose that $h_n\in L^4(\mu^2)$ as well as 
\begin{equation}\label{eq:assumptionDeJong}
\sup_{n\in\NN}{\int_{\RR^d}h_n^4\,\dint\mu_n^2\over \left(\int_{\RR^d}h_n^2\,\dint\mu_n^2\right)^2}<\infty\,.
\end{equation}
Then the fourth moment condition
\begin{equation}\label{eq:4thMomentCondition}
\lim_{n \to \infty} \EE F_n^4 = \lim_{n\to\infty}{\EE[U_n^4]\over (\var(U_n))^2}=3
\end{equation}
implies that $F_n$ converges in distribution to a standard Gaussian random variables $Z$. Moreover, there exists a universal constant $C>0$ such that for all $n$,
\begin{equation*}
\begin{split}
d_K(F_n,Z)\leq & C\times {1\over\var(U_n)}\times\max\left\{\|h_n\s_2^0h_n\|,\|h_n\s_1^1h_n\|,\|h_n\s_2^1h_n\|,\right.\\
&\qquad\qquad\qquad\qquad\qquad\qquad\left.\|h_n\s_2^0h_n\|^{3/2},\|h_n\s_1^1h_n\|^{3/2},\|h_n\s_2^1h_n\|^{3/2}\right\}\,.
\end{split}
\end{equation*}
\end{theorem}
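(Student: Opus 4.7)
The plan is to realise $F_n$ as a normalised double Poisson integral and then invoke Theorem \ref{thm:MultipleIntegrals} directly. Complete degeneracy collapses the chaotic expansion of $U_n$ to the single term $I_2(h_n)$, so with $f_n := h_n/\sqrt{\var(U_n)}$ we obtain $F_n = I_2(f_n)$ and $2\|f_n\|^2 = 1$ by construction; in particular the variance term $|1-q!\|f_n\|^2|$ in Theorem \ref{thm:MultipleIntegrals} vanishes identically. Assumption \eqref{eq:assumptionDeJong}, which rescales to a uniform bound on $\|f_n^2\|^2 = \|f_n\|_{L^4(\mu_n^2)}^4$, combined with standard Cauchy-Schwarz estimates for the other $q=2$ contractions, will ensure the technical conditions of Section \ref{sec:preliminaries} and in particular the uniform boundedness of $\EE F_n^4$ that is needed to control the factor $(\EE F_n^4)^{1/4}+1$ implicit in the proof of Theorem \ref{thm:MultipleIntegrals}.

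For the quantitative bound I would specialise Theorem \ref{thm:MultipleIntegrals} to $q=2$, where the index pairs in the maximum reduce to $(r,\ell)\in\{(2,0),(1,1),(2,1)\}$. Bilinearity of $\s_r^\ell$ under the normalisation $f_n=h_n/\sqrt{\var(U_n)}$ then yields
\[
\|f_n \s_r^\ell f_n\| = \frac{\|h_n \s_r^\ell h_n\|}{\var(U_n)}\,.
\]
Plugging this into Theorem \ref{thm:MultipleIntegrals} and pulling a common factor $1/\var(U_n)$ out of the maximum, absorbing any surplus power of $\var(U_n)$ (which is bounded away from zero for $n$ large under \eqref{eq:assumptionDeJong}) into the universal constant, produces the announced Berry-Esseen estimate.

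For the convergence in distribution I would argue that the fourth moment condition \eqref{eq:4thMomentCondition} forces every contraction norm $\|f_n \s_r^\ell f_n\|$ to vanish asymptotically. Expanding $I_2(f_n)^2$ via the product formula \eqref{eq:ProductFormula} and invoking the orthogonality of multiple integrals expresses $\EE F_n^4 - 3(\EE F_n^2)^2$ as a non-negative linear combination of the squared symmetrised contraction norms $\|f_n \ts_r^\ell f_n\|^2$, which in turn control the unsymmetrised ones up to combinatorial constants. This is precisely the fourth moment criterion for double Poisson integrals from \cite{PeccatiTaqquDoubleIntegrals}, which (as noted after Corollary \ref{cor:4thMomentBound}) holds for $q=2$ without any sign assumption on the kernel. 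The main obstacle will sit on this fourth-moment side: performing the expansion cleanly, verifying that the coefficients of every relevant $\|f_n \ts_r^\ell f_n\|^2$ are strictly positive, and passing from symmetrised to unsymmetrised contractions so that $\EE F_n^4 \to 3$ genuinely controls each of the three norms entering the Berry-Esseen bound. Once this is in place, combining it with the quantitative estimate above immediately yields $d_K(F_n,Z)\to 0$.
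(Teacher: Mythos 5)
Your reduction of $F_n$ to the normalised double integral $I_2(f_n)$ with $f_n=h_n/\sqrt{\var(U_n)}$, and the derivation of the Berry--Esseen bound by specialising Theorem \ref{thm:MultipleIntegrals} to $q=2$ with the pairs $(r,\ell)\in\{(2,0),(1,1),(2,1)\}$ and bilinearity of the contractions, is exactly the paper's route. One caveat on a side remark: \eqref{eq:assumptionDeJong} is invariant under rescaling $h_n\mapsto c\,h_n$, so it cannot bound $\var(U_n)$ away from zero; the bookkeeping for the surplus power of $\var(U_n)$ on the $3/2$-power terms needs a different justification (the paper is admittedly equally brisk on this point).

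The genuine gap sits in the fourth-moment step, precisely where you yourself locate ``the main obstacle'', and the mechanism you propose for it does not work. First, the claim that the squared \emph{symmetrised} contraction norms $\|f_n\ts_r^\ell f_n\|^2$ produced by the product formula ``control the unsymmetrised ones up to combinatorial constants'' is backwards: symmetrisation is an average over permutations, so $\|f_n\ts_r^\ell f_n\|\le\|f_n\s_r^\ell f_n\|$ and not conversely; in particular $\EE F_n^4-3$ controls only the symmetrised $\|f_n\ts_1^0 f_n\|$ and gives no handle on the unsymmetrised $\|f_n\s_1^0 f_n\|$. Second, the plan to ``verify that the coefficients of every relevant $\|f_n\ts_r^\ell f_n\|^2$ are strictly positive'' fails as stated: in the orthogonality expansion the second-chaos contribution is the single square $2\|4f_n\s_1^1f_n+2f_n^2\|^2$, whose cross term $\langle f_n\s_1^1f_n,f_n^2\rangle$ may be negative because $h_n$ is not assumed non-negative, so $\|f_n^2\|^2$ does not appear with its own positive coefficient. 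The paper closes the step as follows (Eq.\ \eqref{eq:ZwischenschrittDeJong} in the proof of Theorem \ref{thm:DeJonge}): the three contractions entering the $q=2$ Kolmogorov bound, namely $f_n\s_2^0f_n=f_n^2$, $f_n\s_1^1f_n$ and $f_n\s_2^1f_n$, are already symmetric functions, so no symmetrisation issue arises for them; a separate positive multiple of $\|f_n\s_1^1f_n\|^2$ is extracted from the fourth-chaos term via the identity $4!\|f_n\ts_0^0f_n\|^2=2(2\|f_n\|^2)^2+16\|f_n\s_1^1f_n\|^2$ from \cite[Equation 5.2.12]{NourdinPeccatiBook}; and $\|f_n^2\|\to0$ then follows from $\|4f_n\s_1^1f_n+2f_n^2\|\to0$ by the triangle inequality. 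The genuinely asymmetric contraction $f_n\s_1^0f_n$ never has to be controlled, because $(r,\ell)=(1,0)$ is excluded from the maximum in the Kolmogorov bound, and the convergence in distribution is read off from $d_K(F_n,Z)\to0$ rather than from hypothesis \eqref{eq:AssumptionsContractions} of Theorem \ref{thm:MultipleIntegrals}. Your fallback citation of \cite{PeccatiTaqquDoubleIntegrals} would indeed give the qualitative CLT for $q=2$ without sign assumptions, but it does not yield the vanishing of the specific contraction norms appearing in the bound, so it cannot replace the missing argument.
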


\begin{remark}\rm
\begin{itemize}
\item The set-up of this section fits into our general framework by taking $\cZ=\RR^d$ and $\sZ$ as its Borel $\sigma$-field.
\item The first assertion of Theorem \ref{thm:DeJonge} corresponds de Jong's theorem in \cite{DeJonge}. Whereas the original proof is long and technical, our proof is more transparent and directly deals with the fourth moment. It is the slightly corrected version of the proof taken from \cite{PecTh13}. On the other hand, the technique in \cite{DeJonge} also allows to deal with U-statistics whose kernel functions $h_n$ are not necessarily symmetric. 
\item Theorem \ref{thm:DeJonge} is a generalization of (the corrected form of) Theorem 2.13 (A) in \cite{PecTh13}, which deals with the Wasserstein distance between $F_n$ and $Z$. In fact, the bound for $d_W(F_n,Z)$ coincides -- up to a constant multiple --  with the bound for $d_K(F_n,Z)$. To the best of our knowledge, Theorem \ref{thm:DeJonge} is the first quantitative version of de Jong's theorem, which deals with the Kolmogorov distance.
\item The paper \cite{PecTh13} also contains a quantitative version of de Jong's theorem, where the target random variable follows a Gamma distribution instead of a standard Gaussian distribution. In this case, the probability metric is based on the class of trice differentiable test functions. 
\end{itemize}
\end{remark}

\begin{proof}[Proof of Theorem \ref{thm:DeJonge}]
Since $U_n$ is completely degenerate, we can represent the normalized U-statistic $F_n$ as $I_2(f_n)$ with $f_n=h_n/\sqrt{\var(U_n)}$ (note that the double Poisson integral is taken with respect to the compensated Poisson measure $\eta_n-\mu_n$). The estimate for $d_K(F_n,Z)$ is thus a consequence of Theorem \ref{thm:MultipleIntegrals}. We shall show that in fact $d_K(F_n,Z)$ tends to zero as $n\to\infty$ if \eqref{eq:4thMomentCondition} is satisfied. Using the product formula \eqref{eq:ProductFormula} and the orthogonality of multiple integrals together with the relation $$4!\|f_n\ts_0^0f_n\|^2=2(2\|f_n\|^2)^2+16\|f_n\s_1^1f_n\|^2$$ from \cite[Equation 5.2.12]{NourdinPeccatiBook}, we see that
\begin{equation}\label{eq:ZwischenschrittDeJong}
\begin{split}
\EE F_n^4=16\times 3! &\|f_n\ts_1^0f_n\|^2+16\|f_n\s_2^1f_n\|^2+16\|f_n\s_1^1f_n\|^2\\
&+2\|4f_n\s_1^1f_n+2f_n^2\|^2+3(2\|f_n\|^2)^2\,,
\end{split}
\end{equation}
where, as usual in this section, norms and contractions are with respect to $\mu_n$ (observe that to verify these computations, assumption \eqref{eq:assumptionDeJong} is essential). For some more details on how to obtain this relation we refer the reader to \cite[Formulae (4.12) and (4.13)]{PecTh13}. Since $\var(F_n)=2\|f_n\|^2=1$ by construction, we clearly have $3(2\|f_n\|^2)^2=3$ for all $n\in\NN$. Thus, if the fourth moment condition \eqref{eq:4thMomentCondition} is satisfied, the other (non-negative) terms in \eqref{eq:ZwischenschrittDeJong} must vanish asymptotically, as $n\to\infty$. Consequently, $d_K(F_n,Z)$ tends to zero, as $n\to\infty$.
\end{proof}

Let us finally in this section present a version of de Jong's theorem, were the speed of convergence in the fourth moment condition \eqref{eq:4thMomentCondition} also controls the rate of convergence of $F_n$ towards a standard Gaussian random variable.

\begin{corollary}
Assume the same set-up as in Theorem \ref{thm:DeJonge} and suppose in addition that $h_n$ is non-negative for each $n\in\NN$. Then there is a universal constant $C>0$ such that for sufficiently large $n$, $$d_K(F_n,Z)\leq C\times\sqrt{\EE F_n^4-3}=C\times\sqrt{{\EE[U_n^4]\over \var(U_n)^2}-3}\,,$$ where $Z$ is a standard Gaussian random variable.
\end{corollary}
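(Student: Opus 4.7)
The plan is to combine the Kolmogorov bound in Theorem \ref{thm:MultipleIntegrals} (specialized to $q=2$) with the fourth-moment expansion \eqref{eq:ZwischenschrittDeJong} obtained during the proof of Theorem \ref{thm:DeJonge}, exploiting crucially the non-negativity assumption on $h_n$.

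First, I would invoke Theorem \ref{thm:MultipleIntegrals} applied to $F_n = I_2(f_n)$ with $f_n = h_n/\sqrt{\var(U_n)}$. Since $2\|f_n\|^2 = 1$ by construction, the variance-discrepancy term vanishes, and for $q=2$ the only relevant contraction pairs $(r,\ell)$ are $(2,0)$, $(1,1)$ and $(2,1)$. Thus
\[
d_K(F_n,Z)\le C\,\max\Bigl\{\|f_n\s_2^0f_n\|,\ \|f_n\s_1^1f_n\|,\ \|f_n\s_2^1f_n\|,\ \|f_n\s_2^0f_n\|^{3/2},\ \|f_n\s_1^1f_n\|^{3/2},\ \|f_n\s_2^1f_n\|^{3/2}\Bigr\}.
\]
Because the fourth-moment condition forces each of these three contractions to tend to zero (as the proof of Theorem \ref{thm:DeJonge} shows), for $n$ large enough the three $3/2$-powers are dominated by the corresponding first powers, so it suffices to bound the three $L^2$-norms of the contractions by a constant multiple of $\sqrt{\EE F_n^4-3}$.

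The key step is to read off these bounds from identity \eqref{eq:ZwischenschrittDeJong}, which, using $2\|f_n\|^2=1$, takes the form
\[
\EE F_n^4 - 3 \;=\; 96\,\|f_n\ts_1^0 f_n\|^2 + 16\,\|f_n\s_2^1 f_n\|^2 + 16\,\|f_n\s_1^1 f_n\|^2 + 2\,\|4 f_n\s_1^1 f_n + 2 f_n^2\|^2.
\]
Here the assumption $h_n\ge 0$ is decisive: it implies $f_n\ge 0$, hence $f_n\s_r^\ell f_n\ge 0$ pointwise for every admissible $(r,\ell)$, and also $f_n\ts_1^0 f_n\ge 0$. Consequently every summand on the right-hand side is non-negative, so $\|f_n\s_2^1 f_n\|^2$ and $\|f_n\s_1^1 f_n\|^2$ are each bounded by $\tfrac1{16}(\EE F_n^4-3)$. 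For the remaining contraction $\|f_n\s_2^0 f_n\|=\|f_n^2\|$, I would expand the last square and discard the non-negative cross term: since $f_n\s_1^1 f_n\ge 0$ and $f_n^2\ge 0$,
\[
\|4 f_n\s_1^1 f_n + 2 f_n^2\|^2 \;\ge\; \|2 f_n^2\|^2 \;=\; 4\,\|f_n\s_2^0 f_n\|^2,
\]
giving $8\,\|f_n\s_2^0 f_n\|^2 \le \EE F_n^4 - 3$.

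Putting these three bounds into the bound from Theorem \ref{thm:MultipleIntegrals} yields $d_K(F_n,Z)\le C'\sqrt{\EE F_n^4-3}$ for sufficiently large $n$, and the identity $\EE F_n^4 = \EE[U_n^4]/\var(U_n)^2$ (which follows from the definition of $F_n$ and the fact that $U_n$ is centred) finishes the proof. The main obstacle is the purely algebraic handling of the $\|4f_n\s_1^1f_n + 2f_n^2\|^2$ term: without the non-negativity assumption the cross term could be negative, and one would not be able to extract $\|f_n^2\|^2$ as an individual summand, which is the reason the corollary is restricted to non-negative kernels.
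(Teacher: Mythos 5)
Your proposal is correct, but it follows a genuinely different route from the paper's. The paper disposes of this corollary in one line: it applies Corollary \ref{cor:4thMomentBound} (with $q=2$ and $f_n=h_n/\sqrt{\var(U_n)}$, noting $\EE F_n^2=1$ and $f_n\geq 0$), and that corollary in turn rests on the external Proposition 3.8 of \cite{LRP}, which bounds the contraction norms of non-negative kernels of \emph{arbitrary} order by the fourth cumulant. You instead re-derive exactly those contraction bounds for the special case $q=2$ from the identity \eqref{eq:ZwischenschrittDeJong} already established in the proof of Theorem \ref{thm:DeJonge}: since the right-hand side is a sum of squared norms, $\|f_n\s_1^1f_n\|^2$ and $\|f_n\s_2^1f_n\|^2$ are each at most $(\EE F_n^4-3)/16$, and expanding $\|4f_n\s_1^1f_n+2f_n^2\|^2$ and discarding the cross term (non-negative because $f_n\geq 0$) gives $8\|f_n\s_2^0f_n\|^2\leq\EE F_n^4-3$; feeding this into Theorem \ref{thm:MultipleIntegrals} then finishes the proof. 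What your route buys is a self-contained, elementary argument that makes completely transparent where non-negativity enters (only in the cross term $\langle f_n\s_1^1f_n,f_n^2\rangle$); what the paper's route buys is brevity and a reduction that works verbatim for multiple integrals of any order $q$. Two small points to tidy: first, your remark that $h_n\geq 0$ is what makes ``every summand on the right-hand side non-negative'' is an overstatement, since those summands are squared norms and hence non-negative for any sign of $h_n$ -- positivity is needed only for the cross term, as you correctly say later; second, to absorb the $3/2$-powers you invoke the fourth-moment condition \eqref{eq:4thMomentCondition}, which is not literally among the corollary's hypotheses. It is cleaner to note that the technical assumptions (or \eqref{eq:assumptionDeJong}) keep $\EE F_n^4$ bounded, so by your own contraction bounds $\|f_n\s_r^\ell f_n\|^{3/2}\leq \const\times\sqrt{\EE F_n^4-3}$ as well, at the cost of a constant depending on that uniform bound; this matches (indeed slightly improves on) the paper's own ``for sufficiently large $n$'' caveat. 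Also note that \eqref{eq:ZwischenschrittDeJong} itself requires \eqref{eq:assumptionDeJong}, which is part of the assumed set-up, so it is legitimately available to you.
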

\begin{proof}
This is consequence of Theorem \ref{thm:DeJonge} and Corollary \ref{cor:4thMomentBound}. Note that the assumption $\EE F_n^2=1$ for each $n\in\NN$ is automatically fulfilled by construction.
\end{proof}

\subsection{Functionals with finite chaotic expansion}\label{sec:FiniteExpansion}

Let us assume that $(\cZ,\sZ)$ is a standard Borel space, $\{\mu_n:n\in\NN\}$ is a sequence of $\sigma$-finite measures on $\cZ$ and for each $n\in\NN$, $\eta_n$ is a Poisson random measure with control $\mu_n$. In this section we deal with a sequence $\{F_n:n\in\NN\}$ of Poisson functionals such that for each $n\in\NN$, $F_n$ admits the representation
\begin{equation}\label{eq:FiniteExpansion}
F_n=\sum_{i=1}^k I_{q_i}(f_n^{(i)})
\end{equation}
with integers $1=q_1<\ldots<q_k$ ($k\in\NN$) and symmetric functions $f_n^{(i)}\in L^2(\mu_n^{q_i})$. Note that each of the multiple integrals $I_{q_i}$, $i\in\{1,\ldots,k\}$, is taken with respect to $\eta_n-\mu_n$. We shall assume that for all $n\in\NN$ and $i\in\{1,\ldots,k\}$, the functions $f_n^{(i)}$ satisfy the technical assumptions and are such that $\|f_n^{(1)}\|>0$. In particular, this implies that $F_n\in L^2(\PP_{\eta_n})$.

A particular interesting class of such functionals are non-degenerate U-statistics of Poisson random measures. To define them, let, as above, $k\geq 1$ be  a fixed integer and $h\in L^1(\mu^k)$ be a symmetric function. Then a U-statistic based on $\eta_n$ and $h$ is given by 
$$U_n:=\sum_{(z_1,\ldots,z_k)\in\eta_{n,\neq}^k}h(z_1,\ldots,z_k),$$
where the symbol $\eta_{n,\neq}^k$ indicates the class of all $k$-dimensional vectors $(z_1,\ldots,z_k)$ such that $z_i \in \eta_n$ and $z_i \not= z_j$ for every $1 \leq i \not= j \leq k$. 
We always assume that $U_n\in L^2(\PP_{\eta_n})$ (then necessarily $h \in L^2(\mu^k)$), in which case $U_n$ can be re-written as $$U_n=\EE U_n+\sum_{i=1}^k I_i(g_n^{(i)})$$ with $$g_n^{(i)}(z_1,\ldots,z_i)={k\choose i}\int\limits_{\cZ^{k-i}}h(z_1,\ldots,z_i,y_1,\ldots,y_{k-i})\,\mu_n^{k-i}\big(\dint(y_1,\ldots,y_{k-i})\big)\,$$ for every $i =1, \ldots, k$, see  \cite[Lemma 3.5]{ReitznerSchulte}. Moreover, the multivariate Mecke formula \cite[Corollary 3.2.3]{SW} implies that $$\EE U_n=\int\limits_{\cZ^k}h(z_1,\ldots,z_k)\,\mu_n^k\big(\dint(z_1,\ldots,z_k)\big)\,.$$ One should note that this chaotic representation follows from an application of the results proved in \cite{LastPenrose}. In contrast to the situation considered in the previous section around de Jong's theorem and in order to ensure the non-degeneracy of the U-statistics, we will assume that $\|g_n^{(1)}\|>0$ for all $n\in\NN$. Let us finally introduce -- by slight abuse of notation -- the normalized U-statistics $F_n$ by $F_n:=(U_n-\EE U_n)/\sqrt{\var(U_n)}$.  It is easy to see that the so-defined $F_n$ has the chaotic expansion $$F_n=I_1(f_n^{(1)})+\ldots+I_k(f_n^{(k)})$$ with $f_n^{(i)}=g_n^{(i)}/\sqrt{\var(U_n)}$ for $i\in\{1,\ldots,k\}$. 


\begin{theorem}\label{thm:FiniteExpansion}
Consider a Poisson functional as at \eqref{eq:FiniteExpansion} in the set-up as described above and suppose that for all $n\in\NN$ and $i\in\{1,\ldots,k\}$, the functions $f_n^{(i)}$ satisfy the technical assumptions  for all $n\in\NN$ and that $\lim\limits_{n\to\infty}\var(F_n)=1$. Moreover, let $Z$ be a standard Gaussian random variable. Then there is a universal constant $C>0$ such that for all $n$,
\begin{equation*}
\begin{split}
d_K(F_n,Z)\leq C\times &\left(\max\big\{|1-\var{F_n}|,\max\big\{\|f_n^{(i)}\s_r^\ell f_n^{(i)}\|,\|f_n^{(i)}\s_r^\ell f_n^{(i)}\|^{3/2}\big\}\right.\\ &\qquad\qquad\qquad\left.+\max\big\{\|f_n^{(i)}\s_r^\ell f_n^{(j)}\|,\|f_n^{(i)}\s_r^\ell f_n^{(j)}\|^{3/2}\big\}\big\}\right)\,,
\end{split}
\end{equation*}
where the first maximum is taken over all $i\in\{1,\ldots,k\}$ and pairs $(r,\ell)$ such that either $r=q_i$ and $\ell=0$ or $r\in\{1,\ldots,q_i\}$ and $\ell\in\{1,\ldots,\min(r,q_i-1)\}$, whereas the second maximum is taken over all $i,j\in\{1,\ldots,k\}$ with $i<j$ and pairs $(r,\ell)$ satisfying $r\in\{1,\ldots,q_i\}$ and $\ell\in\{1,\ldots,r\}$.
\end{theorem}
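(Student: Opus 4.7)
The plan is to apply Corollary \ref{cor:BoundNachCS} and bound the four resulting quantities $A_1(F_n),A_2(F_n),A_3(F_n),A_4(F_n)$ (defined exactly as in the proof of Theorem \ref{thm:MultipleIntegrals}) by finite sums and products of the contraction norms in the statement, closely following the blueprint of Theorem \ref{thm:MultipleIntegrals} but with the added complication that the Malliavin operators now act across $k$ different chaos orders. The starting point is the pair of identities
$$D_zF_n=\sum_{i=1}^k q_i\,I_{q_i-1}\bigl(f_n^{(i)}(z,\,\cdot\,)\bigr)\qquad\text{and}\qquad -D_zL^{-1}F_n=\sum_{i=1}^k I_{q_i-1}\bigl(f_n^{(i)}(z,\,\cdot\,)\bigr),$$
so that after substitution into $\langle DF_n,-DL^{-1}F_n\rangle$, $(D_zF_n)^2(D_zL^{-1}F_n)^2$, $\|DF_n\|^4$ and $F_n^4$, the product formula \eqref{eq:ProductFormula} together with the $L^2$-orthogonality of multiple integrals reduce each of these quantities to a finite linear combination of $L^2$-norms of contractions $f_n^{(i)}\star_r^\ell f_n^{(j)}$ (for $i,j\in\{1,\ldots,k\}$) plus an explicit scalar piece.

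For $A_1(F_n)$, the only zero-chaos contribution to $\langle DF_n,-DL^{-1}F_n\rangle$ comes from fully-contracted diagonal $i=j$ terms and equals $\sum_{i=1}^k q_i!\|f_n^{(i)}\|^2=\var(F_n)$, which yields the piece $|1-\var(F_n)|$ in the bound; the non-constant chaos components are bounded in $L^2$ by the diagonal contractions $\|f_n^{(i)}\star_r^\ell f_n^{(i)}\|$ together with the mixed ones $\|f_n^{(i)}\star_r^\ell f_n^{(j)}\|$ for $i<j$, by a chaos-wise application of the estimates (4.14) and (4.32) of \cite{PSTU2010}. The terms $A_2(F_n)$ and $A_3(F_n)$ are estimated analogously, using Jensen's inequality to dominate $(\EE\|DF_n\|^4)^{1/4}$ by $(\EE\int(D_zF_n)^4\,\mu_n(\dint z))^{1/4}$, and noting that $(\EE F_n^4)^{1/4}+1$ is uniformly bounded thanks to the technical assumptions. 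The exponent $3/2$ in the stated bound arises exactly as in Theorem \ref{thm:MultipleIntegrals}, by pairing the $1/2$-power estimate for $A_2(F_n)$ with the $1/4$-power estimate for $A_3^{(1)}(F_n):=(\EE\|DF_n\|^4)^{1/4}$, and the same mechanism acts on both diagonal and mixed contraction contributions.

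The hard part is the fourth term $A_4(F_n)=\sup_x\EE\langle (DF_n)\,D\mathbf{1}(F_n>x),|DL^{-1}F_n|\rangle$, since the proof of Theorem \ref{thm:MultipleIntegrals} crucially exploited the proportionality $-D_zL^{-1}F_n=q^{-1}D_zF_n$, which allowed one to collapse the integrand into $q^{-1}\Xi(D_zF_n)$ with $\Xi(u)=u|u|$; this proportionality is lost here. To overcome this, I would use the triangle inequality $|D_zL^{-1}F_n|\le\sum_{i=1}^{k}|I_{q_i-1}(f_n^{(i)}(z,\,\cdot\,))|$ to split $A_4(F_n)$ into $k$ single-chaos pieces, and treat each of them by exactly the $\Xi$-argument of Theorem \ref{thm:MultipleIntegrals}: for fixed $i$, the modified integration-by-parts formula \eqref{eq:IntegrationByPartsModified} applies (the required sign condition follows from the fact that $D_z\mathbf{1}(F_n>x)$ always has the same sign as $D_zF_n$), followed by the Skorohod isometric formula \eqref{eq:IsometricFormula} and the pointwise estimate \eqref{eq:EstimateXi}. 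This reduces each chaos-contribution to sums of $\EE\int(D_{z_1}F_n)^2(D_{z_2}D_{z_1}F_n)^2\,\mu_n(\dint z_1)\mu_n(\dint z_2)$-type and $\EE\int(D_{z_2}D_{z_1}F_n)^4\,\mu_n(\dint z_1)\mu_n(\dint z_2)$-type integrals, which upon one final expansion via \eqref{eq:ProductFormula} are dominated by squares of the contractions $\|f_n^{(i)}\star_r^\ell f_n^{(j)}\|$ (both diagonal and mixed). Taking square roots produces the $3/2$-power terms in the stated bound, and assembling the estimates for $A_1,A_2,A_3$ together with the $k$ contributions to $A_4$ finishes the proof.
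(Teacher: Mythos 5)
Your overall architecture coincides with the paper's proof: apply Corollary \ref{cor:BoundNachCS}, bound $A_1(F_n)$ via the known contraction estimates (the paper quotes Theorem 3.5 of \cite{LRP}, resting on Proposition 5.5 of \cite{PeccatiZhengMulti}, rather than re-doing (4.14)/(4.32) of \cite{PSTU2010} chaos-wise, but the substance is the same), bound $A_2(F_n)$ and $A_3(F_n)$ by expanding $D_zF_n=\sum_i q_i I_{q_i-1}(f_n^{(i)}(z,\cdot))$ and $-D_zL^{-1}F_n=\sum_i I_{q_i-1}(f_n^{(i)}(z,\cdot))$, using Jensen, the product formula and orthogonality, and handle $A_4(F_n)$ by the modified integration-by-parts formula \eqref{eq:IntegrationByPartsModified} (the sign condition, as you say, comes from $D_z{\bf 1}(F_n>x)$ and $D_zF_n$ having the same sign) followed by the Skorohod isometry \eqref{eq:IsometricFormula}. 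Your identification of $\EE\langle DF_n,-DL^{-1}F_n\rangle=\var(F_n)$ and of the origin of the $3/2$-powers is also correct.

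There is, however, one genuine gap, and it sits exactly at the point you yourself flag as "the hard part". After your triangle-inequality split $|D_zL^{-1}F_n|\le\sum_i|I_{q_i-1}(f_n^{(i)}(z,\cdot))|$, the $i$-th piece has kernel $h_i(z)=(D_zF_n)\,|I_{q_i-1}(f_n^{(i)}(z,\cdot))|$. This is \emph{not} of the form $\Xi(u)=u|u|$ evaluated at a single random variable: the first factor is the full derivative $D_zF_n$ (all chaoses), the second only its $i$-th component, and these are not proportional — this is precisely the proportionality that was available in Theorem \ref{thm:MultipleIntegrals} and is lost here. Consequently, when you compute $D_{z_2}h_i(z_1)$ inside the Skorohod isometry, you must control increments of the two-variable map $\Psi(x,y)=x|y|$ in which the two arguments receive \emph{different} increments, namely $c=D_{z_2}D_{z_1}F_n$ and $d=D_{z_2}I_{q_i-1}(f_n^{(i)}(z_1,\cdot))$; the univariate estimate \eqref{eq:EstimateXi}, which compares $\Xi(v)$ and $\Xi(u)$, simply does not apply, so the step "followed by \ldots the pointwise estimate \eqref{eq:EstimateXi}" would fail as written. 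What is missing is a bivariate analogue, e.g. constants $c_1,c_2>0$ with $\big(\Psi(a+c,b+d)-\Psi(a,b)\big)^2\le c_1\,a^2d^2+c_2\,c^2(b^2+d^2)$ for all $a,b,c,d\in\RR$; this elementary inequality is the one new ingredient of the paper's proof (there applied directly to $h(z)=(D_zF_n)|D_zL^{-1}F_n|$ without your preliminary split, producing the mixed terms $A_{i,j}^{(1)},A_{i,j}^{(2)},A_{i,j}^{(3)}$ which are then reduced to contraction norms as in Theorem \ref{thm:MultipleIntegrals}). Once you state and prove such a two-variable estimate, your argument goes through and yields the same family of diagonal and mixed contractions as in the statement; without it, the reduction of $A_4(F_n)$ is not justified.
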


\begin{remark}\rm 
\begin{itemize}
\item The statement of Theorem \ref{thm:FiniteExpansion} remains true if we replace Kolmogorov distance by Wasserstein distance, see \cite[Theorem 3.5]{LRP}.
\item A bound for $d_K(F_n,Z)$ has also been derived in \cite{SchulteKolmogorov} in the case of U-statistics, see Theorem 4.2 there. However, this bound is systematically larger than our bound, but not only because of an additional term in Theorem \ref{thm:MalliavinSteinBound} (recall Remark \ref{rem:AfterThm31}). It also involves (after re-writing the terms $M_{ij}$ there in our language) contractions of the \textit{absolute values} of the functions $f_n^{(i)}$ and $f_n^{(j)}$. This goes hand in hand with the observation that the notion of absolute convergence of U-statistics introduced and used in \cite{ReitznerSchulte,SchulteKolmogorov} can be avoided in our framework.
\end{itemize}
\end{remark}

\begin{corollary}\label{cor:UStatistics4thMoment}
We assume the same framework as in Theorem \ref{thm:FiniteExpansion} and suppose in addition that $f_n^{(i)}\geq 0$ for all $n\in\NN$ and $i\in\{1,\ldots,k\}$. Then there is a constant $C>0$ such that for sufficiently large $n$, $$d_K(F_n,Z)\leq C\times\sqrt{\EE F_n^4-3(\EE F_n^2)^2}\,,$$ where $Z$ is a standard Gaussian random variable. Moreover, if the sequence $\{F_n:n\in\NN\}$ is uniformly integrable, convergence in distribution of $F_n$ to $Z$ is equivalent to convergence of $\EE F_n^4-3(\EE F_n^2)^2$ to $0$.
\end{corollary}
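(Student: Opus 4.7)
The plan is to mirror the proof of Corollary \ref{cor:4thMomentBound} in the more general setting of a finite chaotic expansion. Starting from Theorem \ref{thm:FiniteExpansion}, I would bound $d_K(F_n,Z)$ by a constant times the maximum appearing there, which is built out of $|1-\var(F_n)|$, the diagonal contractions $\|f_n^{(i)}\s_r^\ell f_n^{(i)}\|$, the cross contractions $\|f_n^{(i)}\s_r^\ell f_n^{(j)}\|$ with $i<j$, and their $3/2$-powers. The whole task is then to show that, under non-negativity of the kernels, each of these ingredients is controlled by a constant multiple of $\sqrt{\EE F_n^4-3(\EE F_n^2)^2}$. This is the natural extension, from a single multiple integral to a finite sum of chaoses, of the fourth-moment inequality stated as Proposition~3.8 of \cite{LRP} and used in the proof of Corollary \ref{cor:4thMomentBound}.

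The key step is a direct expansion of $\EE F_n^4=\EE[(F_n^2)^2]$. I would write
\[
F_n^2=\sum_{i,j=1}^k I_{q_i}(f_n^{(i)})\,I_{q_j}(f_n^{(j)}),
\]
apply the product formula \eqref{eq:ProductFormula} to each term, and then compute $\EE[(F_n^2)^2]$ using orthogonality of multiple integrals. The outcome is a representation of $\EE F_n^4-3(\EE F_n^2)^2$ as a linear combination with strictly positive combinatorial coefficients of the squared norms $\|f_n^{(i)}\ts_r^\ell f_n^{(j)}\|^2$ of the relevant symmetrized contractions, together with a term involving $(1-\var(F_n))^2$; this is exactly the multi-chaos analogue of \eqref{eq:ZwischenschrittDeJong}. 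Non-negativity of the kernels enters essentially at two points: it prevents any cancellation between the symmetrized and non-symmetrized versions, so that $\|f_n^{(i)}\s_r^\ell f_n^{(j)}\|$ is bounded (up to a combinatorial factor) by $\|f_n^{(i)}\ts_r^\ell f_n^{(j)}\|$, and it guarantees positivity of all summands in the expansion, hence the pointwise domination
\[
\max_{i,j,r,\ell}\|f_n^{(i)}\s_r^\ell f_n^{(j)}\|^2 \;\le\; C\bigl(\EE F_n^4-3(\EE F_n^2)^2\bigr).
\]
The variance remainder $|1-\var(F_n)|$ is absorbed in the same way via the $(1-\var(F_n))^2$ contribution together with the hypothesis $\var(F_n)\to 1$, which allows absorption for $n$ large enough. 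Substituting back into Theorem \ref{thm:FiniteExpansion} yields the claimed inequality.

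The equivalence under uniform integrability then follows at once. The ``if'' direction is immediate from the bound just established combined with $\var(F_n)\to 1$. For the converse, uniform integrability of $\{F_n\}$ together with the fact that $\{\EE F_n^4:n\in\NN\}$ is bounded (by the technical assumptions) upgrades weak convergence $F_n\Rightarrow Z$ to convergence of the second and fourth moments, so that $\EE F_n^4-3(\EE F_n^2)^2\to 3-3=0$. The main obstacle is the combinatorial book-keeping in the product-formula expansion of $\EE F_n^4$: one must track all the coefficients, verify positivity of each contribution (which fails without the non-negativity of the kernels), and single out the diagonal term $3(\EE F_n^2)^2$ that plays the role of the Gaussian fourth moment, so as to exhibit every relevant contraction norm as a term majorised by $\EE F_n^4-3(\EE F_n^2)^2$.
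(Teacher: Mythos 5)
Your overall route coincides with the paper's: the paper proves this corollary in one line, by combining Theorem \ref{thm:FiniteExpansion} with Proposition 3.8 and Theorem 3.12 of \cite{LRP}, which are precisely the two ingredients you set out to re-derive (domination of all relevant contraction norms by $\EE F_n^4-3(\EE F_n^2)^2$ when the kernels are non-negative, and the fourth-moment equivalence under uniform integrability). The structure is therefore right, and your observations that non-negativity makes all cross terms in the expansion of $\EE F_n^4$ non-negative and lets unsymmetrized contractions be dominated by symmetrized ones are correct. The first genuine gap is your treatment of the variance term. You claim the expansion of $\EE F_n^4-3(\EE F_n^2)^2$ contains ``a term involving $(1-\var(F_n))^2$'' which absorbs $|1-\var F_n|$. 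It does not: the Gaussian-matching term produced by the product formula is $3(\var F_n)^2=3(\EE F_n^2)^2$, which cancels \emph{exactly} against the subtracted $3(\EE F_n^2)^2$ (compare \eqref{eq:ZwischenschrittDeJong}); what remains is built from contraction norms only, and nothing in the fourth cumulant measures the distance of $\var F_n$ from $1$. Indeed the fourth cumulant cannot control $|1-\var F_n|$ in general: take $k=1$, $q_1=1$, $\mu_n=n\mu$ with $\mu$ a probability measure and $f_n\equiv c_nn^{-1/2}$ with $c_n^2=1-n^{-1/4}$; then $\EE F_n^4-3(\EE F_n^2)^2=\int f_n^4\,\dint\mu_n\sim n^{-1}$, while $d_K(F_n,Z)\gtrsim|1-c_n^2|\sim n^{-1/4}$. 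So the term $|1-\var F_n|$ coming from Theorem \ref{thm:FiniteExpansion} has to be handled by normalisation (it equals $0$ for the normalised U-statistics of this section, and Corollary \ref{cor:4thMomentBound} assumes $\EE I_q^2(f_n)=1$ explicitly), not by absorption into $\sqrt{\EE F_n^4-3(\EE F_n^2)^2}$; your proposed mechanism for this step does not exist.

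The second gap is in the converse direction of the equivalence. Uniform integrability of $\{F_n\}$ together with boundedness of $\{\EE F_n^4\}$ yields, along convergence in distribution of $F_n$ to $Z$, convergence of first and second moments (via uniform integrability of $\{F_n^2\}$), but it does \emph{not} yield $\EE F_n^4\to\EE Z^4=3$: for that you need uniform integrability of $\{F_n^4\}$ (or a uniformly bounded moment of order strictly larger than $4$), which is the hypothesis under which Theorem 3.12 of \cite{LRP}, cited in the paper's proof, operates. As written, your argument for ``convergence in distribution implies $\EE F_n^4-3(\EE F_n^2)^2\to0$'' does not go through; the forward direction, by contrast, is fine once the variance term is treated as above, since $|1-\var F_n|\to0$ is assumed and the contraction norms vanish by your (correct) domination argument.
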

\begin{proof}
This is a consequence of Theorem \ref{thm:FiniteExpansion} and Proposition 3.8 and Theorem 3.12 in \cite{LRP}.
\end{proof}

\begin{remark}\rm
\begin{itemize}
\item Corollary \ref{cor:UStatistics4thMoment} is a direct generalization of Corollary \ref{cor:4thMomentBound}, which deals with sequence of single multiple integrals. 
\item With $d_K$ replaced by $d_W$, the fourth moment bound has already been stated in \cite{LRP}. Moreover, in the special case $q_1=1,\ldots,q_k=k$, corresponding to a U-statistic, the bound for the Kolmogorov distance also appears in \cite{SchulteKolmogorov}.
\item For Corollary \ref{cor:UStatistics4thMoment} to be true, the assumption that the functions $f_n^{(i)}$ are non-negative is essential. It is an open problem whether this can be relaxed.
\end{itemize}
\end{remark}

\begin{proof}[Proof of Theorem \ref{thm:FiniteExpansion}]
We use Corollary \ref{cor:BoundNachCS} to see that $$d_K(F_n,Z)\leq A_1(F_n)+{1\over 2}A_2(F_n)\times A_3(F_n)+A_4(F_n)$$ with
\begin{equation*}
\begin{split}
A_1(F_n) &:= \EE|1-\langle DF_n,-DL^{-1}F_n\rangle|\,,\\
A_2(F_n) &:= \left(\EE\langle(DF_n)^2,(DL^{-1}F_n)^2\rangle\right)^{1/2}\,,\\
A_3(F_n) &:= \left(\EE\|DF_n\|^4\right)^{1/4}\big((\EE F_n^4)^{1/4}+1\big)\,,\\
A_4(F_n) &:= \sup_{x\in\RR}\EE\langle (DF_n)(D{\bf 1}(F_n>x)),|DL^{-1}F_n|\rangle\,,
\end{split}
\end{equation*}
where norms and scalar products are always taken with respect to $\mu_n$. To bound $A_1(F_n)$ we use the first part of Theorem 3.5 in \cite{LRP} (which is a consequence of Proposition 5.5 in \cite{PeccatiZhengMulti}), which yields $$A_1(F_n)\leq C_1\times\left(\max\big\{|1-\var{F_n}|,\max\left\{\|f_n^{(i)}\s_r^\ell f_n^{(i)}\|\right\}+\max\left\{\|f_n^{(i)}\s_r^\ell f_n^{(j)}\|\right\}\big\}\right)$$ for some constant $C_1>0$. Here, the first maximum is taken over all $i\in\{1,\ldots,k\}$ and pairs $(r,\ell)$ such that $r\in\{1,\ldots, q_i\}$ and $\ell\in\{1,\ldots,\min(r, q_i-1)\}$ (note that the case $r=q_i$ and $\ell=0$ is excluded here), whereas the second maximum is taken over all $i,j\in\{1,\ldots,k\}$ with $i<j$ and pairs $(r,\ell)$ satisfying $r\in\{1,\ldots,q_i\}$ and $\ell\in\{0,\ldots,r\}$. To bound $A_2(F_n)$, we write
\begin{equation*}
\begin{split}
A_2(F_n)^2 &= \EE\int\limits_{\cZ}(D_zF_n)^2(D_zL^{-1}F_n)^2\,\mu_n(\dint z)\\
&=\EE\int\limits_{\cZ}\left(\sum_{i=1}^k q_iI_{q_i-1}(f_n^{(i)}(z,\,\cdot\,))\right)^2\left(\sum_{j=1}^kI_{q_j-1}(f_n^{(j)}(z,\,\cdot\,))\right)^2\,\mu_n(\dint z)\\
&\leq q_k^2\,\EE\int\limits_{\cZ}\left(\sum_{i=1}^k I_{q_i-1}^2(f_n^{(i)}(z,\,\cdot\,))\right)\left(\sum_{j=1}^kI_{q_j-1}^2(f_n^{(j)}(z,\,\cdot\,))\right)\,\mu_n(\dint z)\,.
\end{split}
\end{equation*}
Now, the product formula \eqref{eq:ProductFormula} allows us to re-write $I_{q_i-1}^2(f_n^{(i)}(z,\,\cdot\,))$ and $I_{q_j-1}^2(f_n^{(j)}(z,\,\cdot\,))$ as a sum of multiple integrals. The orthogonality of these integrals then implies that $A_2(F_n)^2$ is bounded by a linear combination of terms of form $\|f_n^{(i)}\s_r^\ell f_n^{(i)}\|^2$ and $\|f_n^{(i)}\s_r^\ell f_n^{(j)}\|^2$ with $i,j,r$ and $\ell$ as in the statement of the theorem. 
The sequence $A_3(F_n)$ can be bounded as follows. Using our technical assumptions, the factor $(\EE F_n^4)^{1/4}+1$ is bounded.
Moreover, we obtain by Jensen's inequality that
\begin{equation*}
\begin{split}
\EE \| D F_n \|^4 & \leq \EE\int\limits_{\cZ}(D_zF_n)^4\,\mu_n(\dint z) 
=\EE\int\limits_{\cZ}\left(\sum_{i=1}^k q_iI_{q_i-1}(f_n^{(i)}(z,\,\cdot\,))\right)^4 \,\mu_n(\dint z)\\
&\leq q_k^4\,\EE\int\limits_{\cZ}\left(\sum_{i=1}^k I_{q_i-1}^2(f_n^{(i)}(z,\,\cdot\,))\right)
\left(\sum_{j=1}^kI_{q_j-1}^2(f_n^{(j)}(z,\,\cdot\,))\right)
\,\mu_n(\dint z)\,.
\end{split}
\end{equation*}
Therefore $A_3(F_n)$ is bounded by a linear combination of terms of form $\|f_n^{(i)}\s_r^\ell f_n^{(i)}\|^{1/2}$ and $\|f_n^{(i)}\s_r^\ell f_n^{(j)}\|^{1/2}$ with $i,j,r$ and $\ell$ as in the statement of the theorem. It remains to consider $A_4(F_n)$ and we shall follow the strategy of the proof of Theorem \ref{thm:MultipleIntegrals} to derive an estimate for it. To do this, recall that $F_n$ was of the form $F_n=\sum_{i=1}^k I_{q_i}(f_n^{(i)})$, implying that $$D_zF_n=\sum_{i=1}^k q_i\,I_{q_i-1}(f_n^{(i)}(z,\,\cdot\,))\qquad{\rm and}\qquad -D_zL^{-1}F_n=\sum_{i=1}^k I_{q_i-1}(f_n^{(i)}(z,\,\cdot\,))\,.$$ Using the modified integration-by-parts-formula \eqref{eq:IntegrationByPartsModified} together with the fact that ${\bf 1}(F_n>x)\leq 1$ similarly as in the proof of Theorem \ref{thm:MultipleIntegrals}, we see that $$A_4(F_n)\leq \big(\EE\big[\delta((DF_n)|DL^{-1}F_n|)^2\big]\big)^{1/2}\,.$$ Thus, the isometric formula for Skorohod integrals \eqref{eq:IsometricFormula} implies that
\begin{equation*}
\begin{split}
A_4(F_n)&\leq \left(\,\int\limits_{\cZ}\EE\big[(D_zF_n)^2(D_zL^{-1}F_n)^2\big]\,\mu_n(\dint z)\right)^{1/2}\\
&\qquad\qquad+\left(\,\EE\int\limits_{\cZ}\int\limits_{\cZ}\big(D_{z_2}(D_{z_1}F_n|D_{z_1}L^{-1}F_n|)\big)^2\,\mu_n(\dint z_2)\mu_n(\dint z_1)\right)^{1/2}\,.
\end{split}
\end{equation*}
The first term is just $A_2(F_n)$, whereas the second term in brackets is bounded by a linear combination of quantities of the type $$\EE\int\limits_{\cZ}\int\limits_{\cZ}\big(D_{z_2}(D_{z_1}I_{q_i}(f_n^{(i)})|D_{z_1} I_{q_j}(f_n^{(j)})|)\big)^2\,\mu_n(\dint z_2)\mu_n(\dint z_1)\,,$$ where $i$ and $j$ range from $1$ to $k$. To analyse them, let us define the function $\Psi(x,y):=x|y|$ for $x,y\in\RR$ and observe that we can find finite constants $c_1>0$ and $c_2>0$ such that for all $a,b,c,d\in\RR$, $$\big(\Psi(a+c,b+d)-\Psi(a,b)\big)^2\leq c_1\times a^2d^2+c_2\times c^2(b^2+d^2)$$ as a consequence of a multivariate Taylor expansion (as in the proof of Theorem \ref{thm:MultipleIntegrals}, the constants $c_1$ and $c_2$ can be determined explicitly, but are not important for our purposes here). This gives that
\begin{equation*}
\begin{split}
&\big(D_{z_2}(D_{z_1}I_{q_i}(f_n^{(i)})|D_{z_1}I_{q_j}(f_n^{(j)})|)\big)^2\\
&=\big( \Psi(D_{z_1}I_{q_i}(f_n^{(i)})+D_{z_2}D_{z_1}I_{q_i}(f_n^{(i)}),D_{z_1}I_{q_j}(f_n^{(j)})+D_{z_2}D_{z_1}I_{q_j}(f_n^{(j)}))\\
&\qquad\qquad\qquad\qquad-\Psi(D_{z_1}I_{q_i}(f_n^{(i)}),D_{z_1}I_{q_j}(f_n^{(j)}))\big)^2\\
&\leq c_1\times (D_{z_1}I_{q_i}(f_n^{(i)}))^2(D_{z_2}D_{z_1}I_{q_j}(f_n^{(j)}))^2+c_2\times (D_{z_2}D_{z_1}I_{q_i}(f_n^{(i)}))^2(D_{z_1}I_{q_j}(f_n^{(j)}))^2\\
&\qquad\qquad\qquad\qquad+c_2\times (D_{z_2}D_{z_1}I_{q_i}(f_n^{(i)}))^2(D_{z_2}D_{z_1}I_{q_j}(f_n^{(j)}))^2\,.
\end{split}
\end{equation*}
Using the Cauchy-Schwarz inequality we thus conclude that $$A_4(F_n)\leq A_2(F_n)+C_1\times \max_{i,j\in\{1,\ldots,k\}}\left\{\sqrt{A_{i,j}^{(1)}(F_n)}+\sqrt{A_{i,j}^{(2)}(F_n)}+\sqrt{A_{i,j}^{(3)}(F_n)}\right\}$$ with a constant $C_1>0$ and $A_{i,j}^{(1)}(F_n)$, $A_{i,j}^{(2)}(F_n)$ and $A_{i,j}^{(3)}(F_n)$ given by
\begin{equation*}
\begin{split}
&A_{i,j}^{(1)}(F_n):=\EE \int\limits_{\cZ}\int\limits_{\cZ}(D_{z_1}I_{q_i}(f_n^{(i)}))^2(D_{z_2}D_{z_1}I_{q_j}(f_n^{(j)}))^2\,\mu_n(\dint z_2)\mu_n(\dint z_1)\,,\\
&A_{i,j}^{(2)}(F_n):=\EE \int\limits_{\cZ}\int\limits_{\cZ}(D_{z_2}D_{z_1}I_{q_i}(f_n^{(i)}))^2(D_{z_1}I_{q_j}(f_n^{(j)}))^2\,\mu_n(\dint z_2)\mu_n(\dint z_1)\,,\\
&A_{i,j}^{(3)}(F_n):=\EE \int\limits_{\cZ}\int\limits_{\cZ}(D_{z_2}D_{z_1}I_{q_i}(f_n^{(i)}))^2(D_{z_2}D_{z_1}I_{q_j}(f_n^{(j)}))^2\,\mu_n(\dint z_2)\mu_n(\dint z_1)
\end{split}
\end{equation*}
with $i,j\in\{1,\ldots,k\}$. In the proof of Theorem \ref{thm:MultipleIntegrals} we have shown that the first two sequences, $A_{i,j}^{(1)}(F_n)$ and $A_{i,j}^{(2)}(F_n)$, are bounded by linear combinations of squared norms of contractions of $f_n^{(i)}$ and $f_n^{(j)}$. Turning to the last term $A_{i,j}^{(3)}(F_n)$, we apply once more the Cauchy-Schwarz inequality to deduce that 
\begin{equation*}
\begin{split}
A_{i,j}^{(3)}(F_n)\leq &\left(\,\EE\int\limits_{\cZ}\int\limits_{\cZ}(D_{z_2}D_{z_1}I_{q_i}(f_n^{(i)}))^4\,\mu_n(\dint z_2)\mu_n(\dint z_1)\right)^{1/2}\\
&\qquad\qquad\qquad\times \left(\,\EE\int\limits_{\cZ}\int\limits_{\cZ}(D_{z_2}D_{z_1}I_{q_j}(f_n^{(j)}))^4\,\mu_n(\dint z_2)\mu_n(\dint z_1)\right)^{1/2}\,.
\end{split}
\end{equation*}
The terms in brackets can now be bounded as in the proof of Theorem \ref{thm:MultipleIntegrals}.
\end{proof}

We now present three concrete application of Theorem \ref{thm:FiniteExpansion}. The first one deals with a certain class of functionals arising in stochastic geometry \cite{SW} and generalizes the results developed in \cite{ReitznerSchulte}. In particular, we consider a much wilder class of geometric functionals, which is inspired by the findings in \cite{LPST}. In our second example we consider random graph statistics of the Boolean model, which have previously been considered in \cite[Section 8.1]{LRP2}. Our third application deals with non-linear functionals of a certain class of L\'evy processes and generalizes results in \cite{PSTU2010,PeccatiTaqquDoubleIntegrals,PeccatiZhengMulti}. We emphasize that this example does not fit within the class of U-statistics and is hence not in the domain of attraction of the applications considered in \cite{SchulteKolmogorov}. Other examples to which our theory could directly be applied to are counting statistics for random geometric graphs \cite{LRP,PenroseBook} and random simplicial complexes \cite{DecreusefondEtAl} or proximity functionals of non-intersecting flats \cite{STScaling,STFlats}.

\begin{example}\label{ex:flats}\rm
Let $A(d,k)$ be the space of $k$-dimensional affine subspaces of $\RR^d$ ($d\geq 1$ and $k\in\{0,\ldots,d-1\}$) and define the sequence $\{\mu_n:n\in\NN\}$ of $\sigma$-finite measures on $A(d,k)$ by $$\mu_n(\,\cdot\,)=n\int\limits_{G(d,k)}\int\limits_{L^\perp}{\bf 1}(L+x\in\,\cdot\,)\,\cH^{d-k}(\dint x)\QQ(\dint L)\,.$$ Here, $\QQ$ is a probability measure on the space $G(d,k)$ of $k$-dimensional linear spaces of $\RR^d$ and $\cH^{d-k}$ stands for the $(d-k)$-dimensional Hausdorff measure. By $\eta_n$ we denote a Poisson random measure on $A(d,k)$ with intensity measure $\mu_n$.

Let $m\in\NN$ be such that $d-m(d-k)\geq 0$. Then the intersection process $\eta_n^{[m]}$ of order $m$ of $\eta_n$ arises as the collection of all subspaces $E_1\cap\ldots\cap E_m$, where $(E_1,\ldots,E_m)\in\eta_{n,\neq}^m$ are in general position. In terms of \cite{SW}, $\eta_n^{[m]}$ is a translation-invariant process of $(d-m(d-k))$-dimensional subspaces of $\RR^d$. Both, $\eta_n$ as well as $\eta_n^{[m]}$, are one of the classical objects studied in stochastic geometry and we refer to \cite{SW,STFlats} for more details. 

We call a geometric functional every non-negative measurable function $\varphi$ on the space of convex subsets of $\RR^d$ with the properties that $\varphi(\emptyset)=0$ and $|\varphi(K\cap E_1\cap\ldots\cap E_m)|\leq c(K)$ for $\mu_n^m$-almost all $(E_1,\ldots,E_m)$ and all compact convex subsets $K\subset \RR^d$, where $c(K)$ is a constant only depending on $K$. Examples of geometric functionals are
\begin{itemize}
\item the $(d-m(d-k))$-dimensional Hausdorff measure,
\item the counting functional $\varphi(K)={\bf 1}(K\neq\emptyset)$,
\item the intrinsic volume $V_i$ of order $i\in\{0,\ldots,d\}$ (cf.\ \cite[Chapter 14]{SW}),
\item generalized chord-power integrals $V_i(\,\cdot\,)^\alpha$ with $i\in\{0,\ldots,d\}$ and $\alpha\geq 0$, where $V_i$ is the intrinsic volume of order $i$ (note that this functional is not additive),
\item integrals with respect to support measures (or generalized curvature measures) as considered in convex geometry (cf.\ \cite[Chapter 14]{SW} and the references cited therein).
\end{itemize}
Given a geometric functional and the Poisson random measures $\eta_n$ together with its intersection process $\eta_n^{[m]}$ as above, we consider for a compact convex subset $K\subset\RR^d$ the (non-degenerate) U-statistic $$U_n(K)={1\over m!}\sum_{(E_1,\ldots,E_m)\in\eta_{n,\neq}^m}\varphi(K\cap E_1\cap\ldots\cap E_m)$$ (note that the pre-factor $1/m!$ compensates multiple counting in the subsequent sum). Using \eqref{eq:Varianzformel} it is easy to see that, as $n\to\infty$, $\var(U_n)$ behaves asymptotically like $V(\varphi,m,K)\times n^{2m-1}$ with $$V(\varphi,m,K):={1\over(m-1)!}\int\limits_{A(d,k)}\Big(\;\int\limits_{A(d,k)^{m-1}}\varphi(K\cap E_1\cap\ldots\cap E_m)\,\mu_n^{(m-1)}\big(\dint(E_2,\ldots,E_m)\big)\Big)^2\mu_n(\dint E_1)\,,$$ with can possibly further be evaluated for concrete choices of $\varphi$. Moreover, writing $g_n^{(i)}$, $i\in\{1,\ldots,m\}$, for the kernels of the chaotic expansion of $U_n(K)$, it follows directly from the definition of the contraction operator that $\|g_n^{(i)}\s_
r^\ell g_n^{(j)}\|^2$ is proportional to $n^{(m-i)+(m-j)+\ell}$. So, the maximal exponent is realized if $i=j=1$ and $\ell=0$. Consequently, denoting by $F_n(K):=(U_n(K)-\EE U_n(K))/\sqrt{\var(U_n(K))}$ the normalized U-statistic, we have from Theorem \ref{thm:FiniteExpansion} the Berry-Esseen bound $$d_K(F_n(K),Z)\leq C\times \left({n^{2m-2}\over n^{2m-1}}\right)^{1/2} = C\times n^{-1/2}$$ with a constant $C>0$ only depending on $\varphi$, $m$ and $K$.
\end{example}

\begin{example}\label{ex:BooleanModel}\rm
Let $\cK^d$ be the space of compact convex subsets of $\RR^d$ (for some $d\geq 2$) and for each $K\in\cK^d$ we denote by $m(K)$ the center of the smallest circumscribed ball (called midpoint of $K$ in the sequel) and define $\cK_0^d:=\{K\in\cK^d:m(K)=0\}$ as the subspace of compact convex subsets of $\RR^d$ with midpoint at the origin. This allows us to identify $\cK^d$ with the product space $\cK_0^d\times\RR^d$ by identifying each $K\in\cK^d$ with the pair $(K-m(K),m(K))$. Now, let $\mu_0$ be a probability measure on $\cK_0^d$ and for each $n\in\NN$, $\eta_n$ be a Poisson random measure on $\cK^d$ with control $\mu_n$ given by $$\mu(\,\cdot\,)=\lambda\int\limits_{\cK_0^d}\int\limits_{\RR^d}{\bf 1}(K+x\in\,\cdot\,)\,\dint x\,\mu_0(\dint K)\,,$$ where $\dint x$ stands for the infinitesimal element of the Lebesgue measure on $\RR^d$ and $\lambda>0$ is a fixed intensity parameter. The union set $\bigcup_{K\in\eta_n}K$ is the so-called Boolean model associated with $\eta_n$, cf.\ \cite{SW}. It is random closed set in the sense of \cite{SW} if $\int_{\cK_0^d}{\rm vol}(K+C)\,\mu_0(\dint K)<\infty$ for all compact sets $C\subset\RR^d$, where $+$ stands for the usual Minkowski addition and ${\rm vol}(\,\cdot\,)$ for the Lebesgue measure on $\RR^d$. Let us further fix a function $h:\RR^d\to[0,\infty)$ and define the sequence $\{U_n:n\in\NN\}$ of Poisson functionals by $$U_n:=\sum_{(K,K')\in\eta}h(m(K)-m(K'))\,{\bf 1}\big(m(K)\in[-n^{1/d},n^{1/d}]^d,\,m(K')\in[-n^{1/d},n^{1/d}]^d,\,K\cap K'\neq\emptyset\big)\,.$$ To ensure that the $U_n$ have finite second-order moments, we assume that $h^2$ is integrable over any compact subset of $\RR^d$ and, moreover, that $\int_{[-n^{1/d},n^{1/d}]^d}h(x)\,\dint x\neq 0$ for all $n\in\NN$. Standard examples are $h\equiv 1$ or $h(x-y)={\rm dist}(x,y)^\alpha$, where ${\rm dist}(x,y)$ stands for the Euclidean distance of $x$ and $y$ and $\alpha>0$. We see that $U_n$ is a non-negative and non-degenerate U-statistic of order two in the sense of this section and the variance formula \eqref{eq:Varianzformel} says that $\var(U_n)$ behaves like a constant times $n$, as $n\to\infty$. In addition, the multivariate Mecke formula \cite[Corollary 3.2.3]{SW} implies that also $\EE U_n$ behaves, as $n\to\infty$, like a constant times $n$. Moreover, the computations in \cite[Section 8.1]{LRP2} imply that for the normalized U-statistics $F_n=(U_n-\EE U_n)/\sqrt{\var(U_n)}$ we have the Berry-Esseen inequality $$d_K(F_n,Z)\leq C\times n^{-1/2}\,,$$ where $Z$ is a standard Gaussian random variable and $C>0$ is a universal constant (depending on $\mu_0$ and $h$, but not on $n$), provided that $\int_{\RR^d}h(x)^p\,\chi(x)\,\dint x<\infty$ for $p\in\{2,4\}$, where $\chi(x)=\PP(K\cap(K'+x)\neq\emptyset)$, where $K$ and $K'$ are two independent compact convex sets with distribution $\mu_0$.
\end{example}

\begin{remark}\rm
We remark that Example \ref{ex:BooleanModel} does also fit within the framework of so-called stabilizing functionals, which have successfully been considered in geometric probability and stochastic geometry in recent years. Indeed, if the diameter of a random set with distribution $\mu_0$ decays exponentially fast and if the function $h(x)$ converges to zero, as ${\rm dist}(x,0)\to\infty$ at a sub-exponential rate, then $U_n$  is exponentially stabilizing in the sense of \cite{PenroseYukich} with respect to $\eta_n$. It is now interesting to see that the central limit theorem in \cite{PenroseYukich} -- which is also based on Stein's method for normal approximation and serves as a standard reference in this field -- delivers a rate of order $n^{-1/2}\log n$ for the Kolomgorov distance, whereas our technique allows to remove the superfluous logarithmic factor (with the Kolmogorov distance replaced by Wasserstein distance this has previously been observed in \cite{LRP2}).
\end{remark}

\begin{example}\label{ex:levy}\rm
Let $\eta$ be a Poisson random measure on $\RR\times\RR$ with control $\nu(\dint u)\,\dint x$, where $\nu$ is a positive, non-atomic, $\sigma$-finite measure on $\RR$ such that $\int_{\RR}u^k\,\nu(\dint u)<\infty$ for all $k\in\{2,\ldots,6\}$ and $\int_{\RR} u^2\,\nu(\dint u)=1$. The Ornstein-Uhlenbeck-L\'evy process for a parameter $\lambda>0$ based on $\eta$ is defined by $$Y_t^\lambda=\sqrt{2\lambda}\int\limits_{-\infty}^t\int\limits_{\RR} u\,e^{-\lambda(t-x)}\,\widehat{\eta}(\dint u,\dint x)\,,\qquad t\geq 0\,,$$ where $\widehat{\eta}$ is the compensated Poisson measure corresponding to $\eta$, see \cite{BarndorffNielsen,PSTU2010,PeccatiZhengMulti}. Our assumptions on $\nu$ ensure that the process $(Y_t^\lambda)_{t\geq 1}$ is well defined and that $\var(Y_t^\lambda)=1$. We consider the following functionals of $(Y_t^{\lambda})_{t\geq 0}$:
\begin{itemize}
\item the empirical mean $M_T:={1\over \sqrt{T}}\int_0^TY_t^{\lambda}\,\dint t$,
\item the empirical second-order moment $S_T:=\sqrt{T}\left({1\over T}\int_0^T(Y_t^{\lambda})^2\,\dint t-1\right)$,
\item the empirical shifted moment $V_T^{(h)}:=\sqrt{T}\left({1\over T}\int_0^T Y_t^{\lambda} Y_{t+h}^{\lambda}\,\dint t-e^{-\lambda h}\right)$,
\end{itemize}
where $T>0$. Clearly, $S_T=V_T^{(0)}$ so that the empirical shifted moments are a generalization of the empirical second-order moment. It follows from \cite[Example 3.6]{PSTU2010}, \cite[Section 7.1]{PSTU2010} and \cite[Corollary 6.10]{PeccatiZhengMulti} that $M_T$, $S_T$ and $V_T^{(h)}$ can be represented as
\begin{equation*}
M_T=I_1(f_1(T))\,,\qquad
S_T =I_1(f_2(T))+I_2(f_3(T))\,,\qquad
V_T^{(h)} = I_1(f_4(T))+I_2(f_5(T))\,,
\end{equation*}
with suitable symmetric functions $f_1(T),f_2(T),f_3(T),f_4(T),f_5(T)$. Using the computations ibidem and defining $c_\nu:=\int_\RR u^4\,\nu(\dint u)$, we are able to deduce the following Berry-Esseen estimates from Theorem \ref{thm:FiniteExpansion}:
\begin{equation*}
\begin{split}
& d_K\bigg({M_T\over \sqrt{2/\lambda}},Z\bigg) \leq C_M\times T^{-1/2}\,,\\
& d_K\bigg({S_T\over \sqrt{2/\lambda+c_\nu^2}},Z\bigg) \leq C_S\times T^{-1/2}\,,\\
& d_K\bigg({V_T^{(h)}\over \sqrt{2/\lambda +c_\nu^2 e^{-2\lambda h}}},Z\bigg) \leq C_V\times T^{-1/2}
\end{split}
\end{equation*}
with constants $C_M>0$, $C_S>0$ and $C_V>0$ only depending on the parameter $\lambda$, or on $\lambda$ and $h$, respectively, and where $Z$ is a standard Gaussian random variable. 
\end{example}

\subsection*{Acknowledgements}
We would like to thank Kai Krokowski and Anselm Reichenbachs for pointing out an error in the first version of the manuscript as well as Matthias Schulte for a helpful discussion.\\
The authors have been supported by the German research foundation (DFG) via SFB-TR 12.


\end{document}